\def\titlerunning#1{\gdef\titrun{#1}}
\def\author#1{\gdef\autrun{\def\and{\unskip, }#1}\gdef\@author{#1}}
\def\address#1{{\def\and{\\\hspace*{18pt}}\renewcommand{\thefootnote}{}%
		\footnote {#1}}%
	\markboth{\autrun}{\titrun}}
\def\email#1{e-mail: #1}
\def\subjclass#1{{\renewcommand{\thefootnote}{}%
		\footnote{\emph{Mathematics Subject Classification (2020):} #1}}}
\def\keywords#1{\par\medskip
	\noindent\textbf{Keywords.} #1}
\newtheorem{theorem}{Theorem}[section]
\newtheorem{corollary}[theorem]{Corollary}
\newtheorem{lemma}[theorem]{Lemma}
\newtheorem{proposition}[theorem]{Proposition}
\theoremstyle{definition}
\newtheorem{definition}[theorem]{Definition}
\newtheorem{remark}[theorem]{Remark}
\numberwithin{equation}{section}
\def \a {\alpha }
\def \b {\beta}
\def \de {\delta}
\def \De {\Delta}
\def \la {\lambda}
\def \La {\Lambda}
\def\Om{\Omega}
\def\na {\nabla}
\def\Ga{\Gamma}
\begin{document}
	\baselineskip=17pt
	
	\titlerunning{Eigenvalue Estimate for the Rough Laplacian on $1$-Forms and its Applications}
	\title{Eigenvalue Estimate for the Rough Laplacian on $1$-Forms and its Applications
	}
	
	\author{Teng Huang and Weiwei Wang}
	
	\date{}
	
	\maketitle
	
\address{Teng Huang: School of Mathematical Sciences, CAS Key Laboratory of Wu Wen-Tsun Mathematics, University of Science and Technology of China, Hefei, Anhui, 230026, People’s Republic of China; \email{htmath@ustc.edu.cn;htustc@gmail.com}}
\address{Weiwei Wang: School of Mathematical Sciences, University of Electronic Science and Technology of China, Chengdu, Sichuan, 611731, People’s Republic of China; \email{wawnwg123@163.com;wangweiwei123@uestc.edu.cn}}
	\subjclass{53C20;53C21;58A10;58A14}

\begin{abstract}
In this article, we establish a geometric lower bound for the first positive eigenvalue $\la^{(1)}_{1}$ of the rough Laplacian acting on $1$-forms for closed $2n$-dimensional Riemannian manifolds with nonvanishing Euler characteristic. In contrast to the case of functions, such a Li-Yau-type estimate does not hold in general, as evidenced by existing counterexamples.  Under assumptions including a lower bound on Ricci curvature, an upper bound on diameter, and an $L^{2p}$-norm bound on the Riemann curvature tensor, we prove that $\la^{(1)}_{1}$ is bounded below by a positive constant depending on these parameters. As applications, we derive vanishing results for the Euler characteristic under certain Ricci curvature bounds and the presence of a nonzero Killing vector field, extending classical Bochner-type theorems. 
\end{abstract}
\keywords{Ricci curvature, rough Laplacian, eigenvalue estimate, Euler characteristic, isometry group}

\section{Introduction}
Let $(X,g)$ be a closed and connected Riemannian manifold and $\na$ the Levi-Civita connection associated to the metric $g$. We consider the \textit{rough Laplacian} 
$$\overline{\De}=\na^{\ast}\na$$
 acting on differential $p$-forms. This is a second-order elliptic operator whose spectrum consists of an unbounded sequence of real eigenvalues $\{\la^{(p)}_{k} \}_{k\in\mathbb{N}}$ that can be arranged in increasing order as
$$0=\la^{(p)}_{0}<\la^{(p)}_{1}\leq\la^{(p)}_{2}\leq\cdots\leq\la^{(p)}_{k}\leq\la^{(p)}_{k+1}\leq\cdots\nearrow+\infty.$$
Here, $\la^{(p)}_{0}$ is the zero eigenvalue whose its multiplicity equals $\dim\ker\na$. If $\dim\ker\na=0$, or equivalently if $(X,g)$ admits no nonzero parallel $p$-forms, then $\la^{(p)}_{0}$ does not exist and the spectrum conventionally begins with $\la^{(p)}_{1}>0$. When $0$ is not in the spectrum, every nonzero $p$-form $\a$ on $X$ satisfies the Rayleigh quotient inequality
$$R(\a)=\frac{\int_{X}|\na\a|^{2} }{\int_{X}|\a|^{2}}\geq\la^{(p)}_{1}.$$

Through the associated Weitzenb\"{o}ck formulas, the rough Laplacian is intimately related to other Laplace-type operators, such as the Dirac Laplacian, the Hodge Laplacian on $p$-forms, and Schr\"{o}dinger operators  (cf.  \cite{Bes,Pet}). Understanding the eigenvalues of the rough Laplacian is crucial in geometry and mathematical physics. There are many related results, see for instance  \cite{Bal,Ber,Col,CC,CM,EKU,GM,Man}. 

We recall some classical results for the smallest positive eigenvalue $\la_{1}^{(0)}$ on functions (i.e. $0$-forms), whose geometry is well studied. Cheeger \cite{Che70} and Buser \cite{Bus} showed that $\la_{1}^{(0)}$ is comparable to the square of the Cheeger isoperimetric constant of $X$. Yau provided more easily computable geometric quantities, such as diameter, volume, and lower bounds of Ricci curvature, to estimate the lower bound of $\la_{1}^{(0)}$. Li and Yau further developed a gradient estimate technique for the first eigenfunction, leading to an effective lower bound for $\la^{(0)}_{1}$. Their result can be stated as follows:

\begin{theorem}(cf. \cite{LY})
Let $(X,g)$ be a closed $n$-dimensional smooth Riemannian manifold. Suppose that the Ricci curvature satisfies 
$$\operatorname{Ric}(g)\geq -(n-1)\kappa,\ and\ {\rm{diam}}X\leq D(g),$$
then there exists a positive constant $c(n)$ such that the eigenvalue of rough Laplacian of $0$-form obeys
\begin{equation}\label{LY1}
\la^{(0)}_{1}D^{2}(g)\geq c^{-1}(n)\exp\{-[1+(1+2c^{2}(n)\La^{2})^{\frac{1}{2} }]\}.
\end{equation}	
\end{theorem}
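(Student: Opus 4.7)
The strategy is the classical gradient-estimate-plus-integration argument of Li--Yau. Let $u$ be a first eigenfunction; since $\overline{\De}$ acting on functions equals $-\De$, we have $\De u = -\la^{(0)}_{1} u$, and because $u \perp \mathbf{1}$ the function $u$ changes sign. Normalize so that $\sup_{X} u = 1$ and $\inf_{X} u = -\b$ with $\b \in (0,1]$, rescaling and replacing $u$ by $-u$ if necessary. The goal is to establish a pointwise gradient estimate of the form
\begin{equation*}
|\na u|^{2} \leq c(n)\bigl(\la^{(0)}_{1} + \kappa\bigr)\bigl(a^{2} - u^{2}\bigr)
\end{equation*}
for a suitable constant $a > 1$, and then to integrate it along a minimizing geodesic connecting a maximum of $u$ to a minimum.

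The pointwise estimate is extracted from Bochner's formula. Setting $f = |\na u|^{2}$, one has
\begin{equation*}
\tfrac{1}{2}\De f \;=\; |\na^{2} u|^{2} + \langle \na u, \na \De u\rangle + \operatorname{Ric}(\na u, \na u) \;\geq\; \tfrac{1}{n}(\De u)^{2} - \la^{(0)}_{1}|\na u|^{2} - (n-1)\kappa\, |\na u|^{2},
\end{equation*}
by Kato's inequality for Hessians of functions and the Ricci lower bound. One then introduces the auxiliary quantity
\begin{equation*}
G \;=\; \frac{|\na u|^{2}}{(a - u)^{2}} + A\,\frac{u}{a - u}
\end{equation*}
with constants $A = A(n,\kappa,\la^{(0)}_{1})$ and $a = a(n,\kappa,\la^{(0)}_{1})$ to be chosen. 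At an interior maximum $x_{0}$ of $G$, one uses $\na G(x_{0}) = 0$ to substitute for cross terms and $\De G(x_{0}) \leq 0$ to close up, combining with the Bochner estimate above. After algebraic simplification this forces a uniform bound $G \leq c(n)\bigl(\la^{(0)}_{1} + \kappa\bigr)$ on $X$, which is precisely the gradient estimate advertised.

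With the gradient estimate in hand, pick a minimizing geodesic $\gamma : [0, L] \ra X$ of length $L \leq D(g)$ with $u(\gamma(0)) = 1$ and $u(\gamma(L)) = -\b$. Then
\begin{equation*}
\int_{-\b}^{1} \frac{d\tau}{\sqrt{a^{2} - \tau^{2}}} \;\leq\; \int_{0}^{L} \frac{|\na u|(\gamma(t))}{\sqrt{a^{2} - u^{2}(\gamma(t))}}\, dt \;\leq\; \sqrt{c(n)\bigl(\la^{(0)}_{1} + \kappa\bigr)}\cdot D(g).
\end{equation*}
The left-hand side is an elementary inverse-trigonometric expression in $a$ and $\b$; solving for $\la^{(0)}_{1} D^{2}(g)$ and optimizing the choice of $a$ produces the announced bound, where $\La$ plays the role of a dimensionless curvature--diameter invariant such as $\La^{2} = (n-1)\kappa D^{2}(g)$.

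The hard part is the balancing act hidden inside the maximum-principle step: the constants $A$ and $a$ must be tuned so that the algebraic inequality for $G(x_{0})$ closes up while at the same time the integral $\int_{-\b}^{1} (a^{2}-\tau^{2})^{-1/2}\,d\tau$ stays finite as $a \downarrow 1$. Pushing $a$ close to $1$ tightens the gradient bound but blows up the integral, whereas pushing $a$ away from $1$ does the opposite; the optimal trade-off is logarithmic in $a-1$, and exponentiating back to recover $\la^{(0)}_{1}$ produces exactly the exponential factor appearing in \eqref{LY1}. Everything else in the proof is bookkeeping.
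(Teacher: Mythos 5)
The paper does not prove this theorem at all: it is quoted verbatim as a classical result of Li and Yau (cf.\ \cite{LY}), so there is no in-paper argument to compare against. Your sketch is precisely the standard Li--Yau gradient-estimate argument from that reference, and the architecture (Bochner formula, trace inequality $|\na^{2}u|^{2}\geq\frac{1}{n}(\De u)^{2}$, maximum principle applied to an auxiliary quotient, then integration along a minimizing geodesic from a maximum to a minimum of $u$) is the right one. Two points to tighten. First, there is an internal inconsistency between your auxiliary function and your integration step: with $G=\frac{|\na u|^{2}}{(a-u)^{2}}+A\frac{u}{a-u}$ the resulting pointwise bound controls $\frac{|\na u|}{a-u}$, and the geodesic integration produces $\log\frac{a+\b}{a-1}$, which is what blows up as $a\downarrow 1$ and, after optimization, yields the exponential factor in \eqref{LY1}; the integrand $\frac{1}{\sqrt{a^{2}-\tau^{2}}}$ you display instead gives $\arcsin(1/a)+\arcsin(\b/a)$, which stays bounded as $a\downarrow 1$, so your ``blows up the integral'' trade-off discussion does not apply to the integral you actually wrote. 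You should commit to one normalization (the logarithmic one is the one that produces the stated exponential dependence on $\La^{2}=\kappa D^{2}$). Second, the maximum-principle step --- substituting $\na G(x_{0})=0$ into $\De G(x_{0})\leq 0$ and closing the resulting quadratic inequality in $\frac{|\na u|^{2}}{(a-u)^{2}}$ with an explicit admissible choice of $A$ and $a$ --- is the entire content of the theorem and is only asserted here; as you acknowledge, the constants must be exhibited for the proof to be complete. Neither issue reflects a wrong idea, but as written the proposal is a correct plan rather than a proof.
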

We are interested in the lower bounds of the smallest positive eigenvalue  $\la_{1}^{(q)}$ of rough Laplacian on $q$-forms with $0<q<n$. By Colbois and Maerten \cite{CM}, there exists no positive universal lower bound for $\la_{1}^{(q)}$ under fixed volume. Furthermore, for each  $k\geq1$, Ann\'{e} and Takahashi (see \cite{AT24,AT25}) constructed on any closed manifold a family of Riemannian metrics, with fixed volume such that the $k$th positive eigenvalue $\la_{k}^{(q)}$ of the rough or the Hodge Laplacian acting on differential $q$-forms converges to zero.

Based on these observations, to obtain the lower bounds,  some geometric or topological conditions are required. In \cite{Bal}, Ballmann, Br\"{u}ning and Carron obtained a lower bound for manifolds with restrictions on holonomy. Lipnowski and Stern in \cite{LS} considered geometric estimates of the smallest positive eigenvalue of the $1$-form Laplacian on a closed hyperbolic manifold.
More recently, Boulanger and Courtois in \cite{BC} established a Cheeger-type inequality for coexact $1$-forms on closed orientable Riemannian manifolds.

A Riemannian manifold $X$ is said to have almost nonnegative sectional curvature if it admits a sequence of Riemannian metrics $g_{i}$ such that 
$$\operatorname{sec}(g_{i})\geq-\frac{1}{i},\ and\  D(g_{i})\leq1,$$
where $\operatorname{sec}(g_{i})$ denotes the sectional of $g_{i}$ and the $D(g_{i})$  the diameter of $g_{i}$. For a closed Riemannian manifold with almost nonnegative sectional curvature and nonzero first de Rham cohomology group, the Euler characteristic must vanish.  This was shown by Chen in \cite{Che} using Morse-Novikov  cohomology and by Yamaguchi in \cite{Yam} via collapsing theory.  

If a sequence of Riemannian metrics $\{g_{i}\}_{n\in\mathbb{N}}$ on a smooth manifold $X$ satisfies
$$\operatorname{Ric}(g_{i})\geq-\frac{1}{i},\ and\  D(g_{i})\leq1$$
for all $i\in\mathbb{N}$, then the Riemannian manifold is said to have almost nonnegative Ricci curvature. Here $\operatorname{Ric}(g_{i})$ stands for the Ricci curvature of $g_{i}$. For a closed Riemannian manifold $(X,g_{i})$ with almost nonnegative Ricci curvature and nonzero first de Rham cohomology group, Theorem 1 in \cite{Yam} implies that the Euler number vanishes provided the sectional curvature of $g_{i}$ has a uniform upper bound. Chen in \cite{Che} reproved this result using  Morse-Novikov cohomology. Moreover, if the metrics in the sequence above are K\"{a}hler, and the manifold has infinite fundamental group, then Chen, Ge and Han in \cite{CJH} proved that the Todd genus also vanishes. 
	
Let $\mathcal{M}(n,D)$ denote the family of closed Riemannian $n$-manifolds $(X,g)$ with sectional curvature $\operatorname{sec}(g)\leq1$ and diameter $D(g)\leq D$. Yamaguchi in \cite{Yam88} proved that there is an $\varepsilon> 0$ depending on $n$ and $D$ such that if $X\in\mathcal{M}(n,D)$ satisfy $\operatorname{Ric}(g)>-\varepsilon$, then $X$ is a fiber bundle over a $b_{1}(X)$-torus. In particular, if $b_{1}(X)=n-1$, then $X$ is diffeomorphic to an infranilmanifold,and if $b_{1}(X)=n$, then $X$ is diffeomorphictoan $n$-torus.  The following conjecture due to Gromov (\cite[5.22]{Gro80} or \cite[5.22]{Gro07}).\\
(\textbf{Gromov's conjecture}) There is an $\varepsilon>0$ depending on $n$ and $D$, such that if $X$ is an $n$-dimensional with $b_{1}(X)=n$ and $$\operatorname{Ric}(g)\geq-\varepsilon,\ and\ {\rm{diam}}(X)\leq D,$$ 
then $X$ is a homeomorphic $n$-torus.

This conjecture has been proved by Colding, see \cite{Col96}. Anderson \cite{An94} constructed numerous manifolds with $b_1 \leq n-1$ that are not fibered over $T^{b_1}$, yet admit metrics with arbitrarily small Ricci curvature. Consequently, this conjecture fails in general.  

We will use the manifolds constructed by Anderson to demonstrate that the eigenvalue $\la^{(1)}_{1}$ lower bound estimate also generally fails to satisfy the Li-Yau inequality (see Definition \ref{D1}).
\begin{proposition}\label{P2}
There exist closed Riemannian manifolds such that the eigenvalues of the rough Laplacian on $1$-forms do not satisfy a Li-Yau-type inequality.	
\end{proposition}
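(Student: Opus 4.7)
The plan is to exhibit the Anderson manifolds cited above as counterexamples and to prove directly that the first positive eigenvalue $\la_{1}^{(1)}$ of the rough Laplacian on $1$-forms collapses to zero along Anderson's sequence of metrics, thereby precluding any Li-Yau-type lower bound depending only on dimension, diameter, and the lower Ricci bound (in the sense of Definition~\ref{D1}).

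Concretely, I would pick one of Anderson's closed $n$-manifolds $X$ with $1\leq b_{1}(X)\leq n-1$ that is not fibered over $T^{b_{1}(X)}$, endowed with a sequence of metrics $\{g_{i}\}$ satisfying $\operatorname{Ric}(g_{i})\geq -\ep_{i}$ with $\ep_{i}\searrow 0$ and $\operatorname{diam}(X,g_{i})\leq D$ uniformly. For each $i$, the space of Hodge-harmonic $1$-forms has dimension $b_{1}(X)\geq 1$, so choose a nonzero harmonic $\w$; the Weitzenb\"ock identity for $1$-forms reads $\De_{H}=\ov{\De}+\operatorname{Ric}$, so pairing $\ov{\De}\w=-\operatorname{Ric}(\w)$ with $\w$ in $L^{2}(g_{i})$ gives
\[
\int_{X}|\na\w|^{2}\,dV_{g_{i}}=-\int_{X}\operatorname{Ric}_{g_{i}}(\w,\w)\,dV_{g_{i}}\leq \ep_{i}\,\|\w\|_{L^{2}(g_{i})}^{2}.
\]

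The decisive step is to select such an $\w=\w_{i}$ orthogonal to the space $\ker\na$ of parallel $1$-forms in $L^{2}(g_{i})$. If no harmonic $1$-form were orthogonal to $\ker\na$, then every harmonic $1$-form would be parallel, forcing $\dim\ker\na=b_{1}(X)$; fixing an $L^{2}$-basis $\eta_{1},\ldots,\eta_{b_{1}}$ of parallel (hence closed) $1$-forms and integrating along paths produces an Albanese-type smooth map $f\colon X\to T^{b_{1}(X)}$ whose differentials $df_{j}=\eta_{j}$ are pointwise linearly independent everywhere, so $f$ is a submersion of closed manifolds and hence, by Ehresmann's theorem, a locally trivial fiber bundle, contradicting Anderson's choice of $X$. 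With such $\w_{i}$ in hand, the variational characterization $\la_{1}^{(1)}(g_{i})=\inf\{R(\a):\a\perp\ker\na,\ \a\neq 0\}$ combined with the Weitzenb\"ock estimate yields
\[
\la_{1}^{(1)}(g_{i})\leq \frac{\int_{X}|\na\w_{i}|^{2}\,dV_{g_{i}}}{\int_{X}|\w_{i}|^{2}\,dV_{g_{i}}}\leq \ep_{i}\longrightarrow 0,
\]
while the uniform lower Ricci bound and the uniform diameter bound persist; any Li-Yau-type inequality would keep $\la_{1}^{(1)}(g_{i})$ bounded away from zero by a constant depending only on $n$ and $D$, producing the desired contradiction.

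The main obstacle is the topological reduction just used: showing that having $b_{1}(X)$ pointwise linearly independent parallel $1$-forms upgrades to a smooth fibration $X\to T^{b_{1}(X)}$. Identifying the target with $T^{b_{1}(X)}$ relies on the period lattice in $H^{1}(X;\R)^{\ast}$ having full rank $b_{1}(X)$ (equivalently, $\operatorname{rank}_{\Z}H_{1}(X;\Z)=b_{1}(X)$), and Ehresmann's theorem is needed to promote the resulting submersion between compact manifolds to a locally trivial bundle; beyond this reduction, the argument is a direct Weitzenb\"ock and Rayleigh-quotient computation that requires no delicate analysis.
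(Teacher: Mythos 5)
Your proposal is correct and follows essentially the same route as the paper: the paper packages your argument as Proposition \ref{P5} (Li--Yau plus small $\kappa D^{2}$ forces every harmonic $1$-form to be parallel via the Bochner formula and the variational characterization of $\la_{1}^{(1)}$ relative to $\ker\na$, hence a fibration over $T^{b_{1}}$ by the Tischler lemma) and then contradicts Anderson's Theorem \ref{T1}, which is exactly the contrapositive of your chain ``not fibered $\Rightarrow$ a non-parallel harmonic form exists $\Rightarrow$ $\la_{1}^{(1)}$ is small $\Rightarrow$ Li--Yau fails.'' The Tischler/Ehresmann step you flag as the main obstacle is handled in the paper by citing \cite{Tis,Yam88}, so no new idea is needed there.
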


In this article, under Ricci curvature and Riemannian curvature conditions, we establish geometric lower bounds for $\lambda_1^{(1)}(X)$ on a closed $2n$-dimensional Riemannian manifold $(X,g)$ with nonvanishing Euler characteristic $\chi(X) \neq 0$.

\begin{theorem}\label{T3}

	Let $(X,g)$ be a closed $2n$-dimensional smooth Riemannian manifold with nonvanishing Euler characteristic. Suppose the Riemannian curvature satisfies
	\[
	\operatorname{Ric}(g) \geq -(2n-1)\kappa, \quad \text{and} \quad \operatorname{diam}(X) \leq D(g).
	\]
	Then the first eigenvalue of the rough Laplacian on $1$-forms satisfies
	\[
	\sqrt{\lambda^{(1)}_1} D \geq \min\left\{ 
	\left( \frac{\tilde{C}(n,p)}{1 + \sqrt{\|\mathrm{Riem}\|_{2p}D^{2}} }e^{-(2n-1)\sqrt{\kappa D^2}} \right)^{\frac{2pn}{p-n}},
	e^{-(2n-1)\sqrt{\kappa D^2}}
	\right\},
	\]
	where $\tilde{C}(n,p)$ is a positive constant and $p>n$.
\end{theorem}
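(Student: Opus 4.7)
The plan is to argue by contradiction via a Poincar\'e-Hopf argument. Assume the conclusion fails, and let $\alpha$ be a first eigen-$1$-form of $\bar\Delta$ normalized by $\int_{X}|\alpha|^{2}=1$. The target is to show that under this smallness assumption $|\alpha|$ is nowhere zero on $X$; the metric dual $\alpha^{\sharp}$ is then a nowhere-vanishing vector field, forcing $\chi(X)=0$ by Poincar\'e-Hopf and contradicting the hypothesis $\chi(X)\neq 0$. The argument splits into an $L^{\infty}$ upper bound for $|\alpha|$, a pointwise gradient bound for $\nabla\alpha$, and a Lipschitz propagation from a single base point.

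For the first step, the Bochner identity gives $\tfrac{1}{2}\bar\Delta|\alpha|^{2}=\lambda_{1}^{(1)}|\alpha|^{2}-|\nabla\alpha|^{2}$, so $|\alpha|^{2}$ is a subsolution of $(\bar\Delta-2\lambda_{1}^{(1)})u\leq 0$. The Ricci lower bound together with the diameter bound controls the Sobolev constant of $(X,g)$ by a factor of order $e^{c(n)(2n-1)\sqrt{\kappa D^{2}}}$ through Bishop-Gromov volume comparison and the estimates underlying \eqref{LY1}. Standard Moser iteration then yields $\|\alpha\|_{L^{\infty}}\leq C_{1}(n,\kappa,D)\|\alpha\|_{L^{2}}$, and combined with the volume comparison this forces the existence of a base point $x_{0}\in X$ with $|\alpha(x_{0})|\geq c_{0}(n,\kappa,D)>0$.

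The second and most delicate step is a pointwise gradient bound of the form $\|\nabla\alpha\|_{L^{\infty}}\leq C_{2}\,(\lambda_{1}^{(1)})^{\beta}$ for some $\beta=\beta(n,p)>0$, with $C_{2}$ depending on $n,p,\kappa,D$ and $\|\mathrm{Riem}\|_{2p}$. The natural starting point is the Bochner formula for $\nabla\alpha$:
\[
\tfrac{1}{2}\bar\Delta|\nabla\alpha|^{2}+|\nabla^{2}\alpha|^{2}=\lambda_{1}^{(1)}|\nabla\alpha|^{2}+\mathrm{Riem}\ast\nabla\alpha\ast\nabla\alpha+\nabla\mathrm{Riem}\ast\alpha\ast\nabla\alpha.
\]
The main obstacle is the $\nabla\mathrm{Riem}$ term, for which no hypothesis is available. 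The resolution is to multiply this identity by $|\nabla\alpha|^{2q-2}$ and integrate over $X$: integration by parts converts the $\nabla\mathrm{Riem}$ contribution into expressions involving only $\mathrm{Riem}, \alpha, \nabla\alpha$ and $\nabla^{2}\alpha$, which can be absorbed against $\|\mathrm{Riem}\|_{2p}$ by H\"older. The assumption $p>n$ is exactly what makes the Sobolev embedding $W^{1,2p}\hookrightarrow L^{\infty}$ valid on the $2n$-manifold, and Moser iteration, with the curvature cross terms absorbed through $\|\mathrm{Riem}\|_{2p}$, delivers the $L^{\infty}$ bound. Tracking the algebra of the iteration exponents is where the factor $2pn/(p-n)$ in the statement originates, together with the dependence $1+\sqrt{\|\mathrm{Riem}\|_{2p}D^{2}}$ on curvature.

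To conclude, Kato's inequality gives $\bigl|\nabla|\alpha|\bigr|\leq|\nabla\alpha|$, so $|\alpha|$ is Lipschitz with constant $\|\nabla\alpha\|_{L^{\infty}}$. For every $x\in X$,
\[
|\alpha(x)|\geq|\alpha(x_{0})|-\|\nabla\alpha\|_{L^{\infty}}\cdot d(x,x_{0})\geq c_{0}-C_{2}\,(\lambda_{1}^{(1)})^{\beta}\,D.
\]
If $\sqrt{\lambda_{1}^{(1)}}D$ is strictly smaller than the bound in the statement, then a quantitative version of this calculation makes the right-hand side positive, so $|\alpha|>0$ on $X$, the musical dual $\alpha^{\sharp}$ is a nowhere-vanishing vector field, and $\chi(X)=0$ by Poincar\'e-Hopf, contradicting $\chi(X)\neq 0$. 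The second entry $e^{-(2n-1)\sqrt{\kappa D^{2}}}$ inside the minimum handles the regime where the Ricci-only factor already dominates and the refined gradient estimate is not needed.
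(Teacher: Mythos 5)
Your plan follows essentially the same route as the paper: a Sobolev constant of order $De^{(2n-1)\sqrt{\kappa D^2}}$ from the Ricci and diameter bounds, a Moser iteration for $\|\nabla\alpha\|_{\infty}$ in which the $\nabla\mathrm{Riem}$ term is integrated by parts and absorbed via $\|\mathrm{Riem}\|_{2p}$ (the paper's Proposition \ref{P6}, following Aubry), and then a Lipschitz propagation of $|\alpha|$ from a point where it is bounded below, concluding by Poincar\'e--Hopf. The only differences are cosmetic (you propagate $|\alpha|$ rather than $|\alpha|^{2}$ and leave the iteration exponents untracked), so the proposal is correct in outline and matches the paper's argument.
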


\begin{remark}
The spectrum of $\De_{d}$ acting on $p$-forms will be denoted by
$$0=\mu^{(p)}_{0}<\mu^{(p)}_{1}\leq\mu^{(p)}_{2}\leq\cdots\leq\mu^{(p)}_{k}\leq\mu^{(p)}_{k+1}\leq\cdots\nearrow+\infty.$$
Colbois and Courtois in \cite[Theorem 0.4]{CC} proved that if a closed $n$-dimensional manifold $(X,g)$ satisfies 
$$|{\rm{sec}}_{g}|\leq K,\ {\rm{diam}}(X)\leq D, \ and\ {\rm{Vol}}(g)\geq\nu,$$
then there exists a constant $C(n,p,K,D,\nu) > 0$ such that for every $p = 1, \dots, n-1$,
	\[
	\mu^{(p)}_1\geq C(n, p,K,D, V).
	\]
The main idea of their proof is based on the fact that the space of manifolds satisfying the conditions of their theorem is compact. Although we can use the same reasoning to show that $\la^{(p)}_{0}$ also has a positive lower bound, we cannot obtain an explicit expression for it.	Suppose that $(X, g)$ is a closed $2n$- manifold of with Euler characteristic $\chi(X) \neq 0$ and the sectional curvature is bounded by \(K > 0\), i.e., $|\sec_{g}| \leq K$, then by Gauss-Bonnet-Chern formula, the volume of the manifold satisfies the following lower bound
\[
{\rm{Vol}}(g) \geq \frac{|\chi(X)|}{c(n) K^{n}}
\]
where $c(n)$ is a positive constant. Our Theorem \ref{T3} provides a specific explicit lower bound for the eigenvalue $\la^{(1)}_{1}$ for manifolds satisfying the above conditions.
\end{remark}
For a smooth Riemannian manifold $(X,g)$, let $\operatorname{Iso}(X,g)$ denote the group of Riemannian isometries $f : (X,g) \to (X,g)$. The Lie algebra of $\operatorname{Iso}(X,g)$ is spanned by the Killing vector fields on $(X,g)$. A classical theorem of Bochner states that the isometry group of a compact Riemannian manifold with negative Ricci curvature is finite (see \cite{Boc}). Several authors have since attempted to estimate the order of $\operatorname{Iso}(X,g)$ under negative Ricci curvature assumptions (see \cite{DSW,Kat,KK}).

\begin{corollary}\label{C4}
	Let $(X,g)$ be a closed $2n$-dimensional smooth Riemannian manifold with nonvanishing Euler characteristic. Suppose
	\[
	-(2n-1)\kappa \leq \operatorname{Ric}(g) < \lambda, \quad
	\|\mathrm{Riem}\|_{2p} \leq K, \quad
	\operatorname{diam}(X) \leq D,
	\]
	where
$$\sqrt{\lambda D^2}= \min\left\{ 
		\left( \frac{\tilde{C}(n,p)}{1 + \sqrt{KD^{2}} }e^{-(2n-1)\sqrt{\kappa D^2}} \right)^{\frac{2pn}{p-n}},
		\tilde{C}(n,p)e^{-(2n-1)\sqrt{\kappa D^2}}
		\right\},\ and\ p>n.$$ 
Then the isometry group of $X$ is finite.
\end{corollary}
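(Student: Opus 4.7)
The plan is to reduce finiteness of the isometry group to the non-existence of Killing vector fields on $X$. By the Myers--Steenrod theorem, $\operatorname{Iso}(X,g)$ is a compact Lie group whose Lie algebra is naturally identified with the space of Killing vector fields on $(X,g)$. A compact Lie group whose Lie algebra is trivial is discrete, hence finite. So the entire task reduces to showing that $(X,g)$ admits no nonzero Killing vector field.

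First I would use the hypothesis $\chi(X) \neq 0$ to rule out parallel $1$-forms: by the Poincar\'e--Hopf theorem every vector field on $X$ has a zero, while a parallel vector field has constant norm, so any zero forces it to vanish identically. Thus $\dim\ker\na = 0$ on $1$-forms, the spectrum of $\overline{\De}$ on $1$-forms begins with $\la^{(1)}_{1} > 0$, and Theorem \ref{T3} applies. Moreover, the assumption $\|\mathrm{Riem}\|_{2p}\leq K$ plugged into the bound of Theorem \ref{T3} gives, after unravelling, $\sqrt{\la^{(1)}_{1}}D \geq \sqrt{\la D^{2}}$, i.e., $\la^{(1)}_{1} \geq \la$.

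Next, assume toward contradiction that a nonzero Killing vector field $V$ exists, and let $\a = V^\flat$. The standard Weitzenb\"ock-type identity for Killing $1$-forms (derived from $\na\a$ being skew and $d^{\ast}\a = -\operatorname{div}V = 0$) reads
\[
\overline{\De}\a = \operatorname{Ric}(\a).
\]
Pairing with $\a$ and integrating yields $\int_{X}|\na\a|^{2} = \int_{X}\operatorname{Ric}(\a,\a)$. The strict upper bound $\operatorname{Ric}(g) < \la$ gives $\int_{X}\operatorname{Ric}(\a,\a) < \la\int_{X}|\a|^{2}$, while the Rayleigh quotient inequality (recalled in the introduction, applicable since $\ker\na = 0$) combined with $\la^{(1)}_{1}\geq\la$ gives $\int_{X}|\na\a|^{2} \geq \la^{(1)}_{1}\int_{X}|\a|^{2} \geq \la\int_{X}|\a|^{2}$. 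These two inequalities are incompatible since $\a\not\equiv 0$, yielding the desired contradiction and completing the proof.

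I expect the main obstacle to be essentially bookkeeping rather than conceptual: one must verify the Weitzenb\"ock identity $\overline{\De}\a = \operatorname{Ric}(\a)$ for Killing $1$-forms with the correct sign convention, and one must confirm that the expression defining $\la$ in the statement is indeed majorized by the lower bound in Theorem \ref{T3} (with $\|\mathrm{Riem}\|_{2p}$ replaced by its upper bound $K$), so that the chain $\la \leq \la^{(1)}_{1}$ genuinely holds. Once these two routine checks are in place, the contradiction is immediate and the theorem follows.
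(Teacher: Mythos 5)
Your proposal is correct and follows essentially the same route as the paper: the paper's Proposition \ref{P4} is precisely your integrated Bochner identity for Killing fields (stated there via $-\tfrac12\Delta_d|V|^2=|\nabla V|^2-\operatorname{Ric}(V,V)$, equivalent to your $\overline{\Delta}\alpha=\operatorname{Ric}(\alpha)$), giving $\lambda_1^{(1)}\leq\lambda$, which is then played off against the lower bound of Theorem \ref{T3} exactly as you describe. Your closing caveat is well placed — the only point requiring care is the comparison between the minimum defining $\lambda$ in the corollary and the minimum in Theorem \ref{T3} (whose second terms differ by a factor of $\tilde{C}(n,p)$), but that is a bookkeeping issue in the paper's statement rather than a gap in your argument.
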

\begin{remark}
Chen and Han \cite{CH} proved that if $X$ is a closed $2n$-dimensional complex manifold with nonzero holomorphic Euler characteristic, then for any fixed $\lambda_1 > 0$, there exists $\varepsilon > 0$ depending on $\lambda_1$ and $n$ such that for any K\"{a}hler metric $g$ on $X$, the isometry group of $(X, g)$ is finite, provided that
\[
-\lambda_1 \leq \operatorname{Ric}(g) \leq \varepsilon \quad \text{and} \quad D(g) \leq 1.
\]
A similar conclusion holds when $X$ is a closed $n$-manifold with nonzero Euler characteristic or nonzero signature, or a closed $4n$-dimensional spin manifold with nonzero elliptic genera, under the same Ricci curvature and diameter assumptions, provided the Riemann curvature tensor also satisfies an $L^p$-bound:
\[
\|\mathrm{Riem}\|_p \leq \lambda_2,
\]
for some $\lambda_2 > 0$, where $2p > n$. Their argument relied on a vanishing theorem for the Dirac operator coupled to a Killing vector field under the given curvature conditions, which provided an obstruction to the existence of a topological invariant. However, their derivation did not explicitly impose an upper bound on the Ricci curvature. If one could show that the Killing vector field is everywhere nonvanishing under these curvature conditions, then by Bott's results \cite{Bot}, it would follow that all Pontryagin numbers of such a manifold must vanish.	

\end{remark}
\noindent\textbf{Notations}\\
Throughout this article, $E\rightarrow X$ will denote a smooth vector bundle over a closed $n$-dimensional Riemannian manifold $(X,g)$. We denote by $\mathrm{Riem}$ and $R^{E}$ the curvature tensors of $X$ and $E$, and ${\rm{Ric}}$ respectively the Ricci curvature of $X$. We also define for any function $f:X\rightarrow\mathbb{R}$ its positive $$f^{+}(x)=\sup\{0,f(x) \}=\frac{f(x)+|f(x)| }{2}$$ and its negative part $$f^{-}(x)=\sup\{0,-f(x)\}=\frac{|f(x)|-f(x)}{2}.$$
The normalized $L^{p}$-norms are defined by
$$\|f\|_{p}=\bigg{(} \frac{1}{{\rm{Vol}}(g)}\int_{X}|f|^{p}dvol_{g}\bigg{)}^{\frac{1}{p}}.$$

\section{Preliminaries}

\subsection{Spectral Theory of the Rough Laplacian}
In this section, we recall basic facts on the rough Laplacian. (For a general reference, see Appendix of \cite{CM}  or \cite{Man}). Let $(X, g)$ be a closed connected $n$-dimensional Riemannian manifold without boundary, and let $(E,\na)$ be a Riemannian vector bundle with finite-dimensional fiber over $X$--that is, $E$ is a vector bundle equipped with a smooth metric $\langle \cdot, \cdot \rangle$ and a compatible connection $\nabla$. On the space $\Gamma(E)$ of smooth sections of $E$, we define the $L^2$-inner product $(\cdot, \cdot)$ induced by $\langle \cdot, \cdot \rangle$ and $g$. The connection $\nabla$ extends naturally to $p$-tensors on $X$ taking values in $E$. Its formal adjoint $\nabla^*$ with respect to the $L^2$-inner product defines the rough Laplacian (or connection Laplacian) acting on $\Gamma(E)$ by
\[
\overline{\Delta} = \nabla^* \nabla.
\]
The spectrum of $\overline{\De}$ is discrete and nonnegative, and can be written as
$$0=\la_{0}(E)<\la_{1}(E)\leq\la_{2}(E)\leq\cdots\leq\la_{k}(E)\leq\la_{k+1}(E)\leq\cdots\nearrow+\infty.$$
For a nonzero section $s \in \Gamma(E)$, the Rayleigh quotient is given by 
$$R(s)=\frac{\|\na s\|^{2}_{L^{2}(X)} }{\|s\|^{2}_{L^{2}(X)} }.$$
Let us define $E_{0}:=\ker\na$, which is a finite dimensional vectorial subspace of $\Ga(E)$. Every section $\alpha \in E_0$ is an eigenfunction corresponding to the zero eigenvalue. Define the orthogonal complement
$$H_{0}=\{\a\in\Ga(E)|\forall\a_{0}\in E_{0},(\a,a_{0})=0 \}.$$
Then the first positive eigenvalue satisfies
$$\la_{1}(E)=\inf\{R(\a)|\a\in H_{0},\|\a\|_{L^{2}(X)}\neq0 \}.$$

A classical theorem of Hopf states that the Euler characteristic of a closed Riemannian manifold equals the signed count of zeros of a generic vector field. It follows that if a closed Riemannian manifold admits a nonzero parallel vector field, then its Euler characteristic must vanish.
\begin{lemma}\label{L5}
Let $(X,g)$ be a closed $2n$-dimensional smooth Riemannian manifold with nonvanishing Euler characteristic $\chi(X)\neq 0$. Then 
$$\la^{(1)}_{1}=\inf\{R(\a)|\a\in\Om^{1}(X),\|\a\|_{L^{2}(X)}\neq0 \}.$$
\end{lemma}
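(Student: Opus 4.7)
The goal is to remove the orthogonality constraint $\alpha \in H_{0}$ from the variational characterization of $\lambda^{(1)}_{1}$ by showing that, under the hypothesis $\chi(X)\neq 0$, the space $E_{0}=\ker\nabla\subset\Omega^{1}(X)$ is trivial. Once $E_{0}=\{0\}$, one has $H_{0}=\Omega^{1}(X)$ and the formula reduces to an unconstrained infimum over nonzero $1$-forms, exactly as claimed.

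The plan is to invoke the Hopf theorem stated in the paragraph just preceding the lemma: for a closed Riemannian manifold, the Euler characteristic equals the signed zero count of a generic vector field, and in particular a nonzero parallel vector field has no zeros and forces $\chi(X)=0$. First, I would use the musical isomorphism induced by $g$ to identify $\Omega^{1}(X)$ with the space of vector fields $\mathfrak{X}(X)$. Since the Levi-Civita connection is metric-compatible, $\nabla\alpha=0$ if and only if $\nabla\alpha^{\sharp}=0$, so parallel $1$-forms correspond bijectively to parallel vector fields.

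Combining these two facts: if there existed a nonzero $\alpha\in\ker\nabla\subset\Omega^{1}(X)$, then $\alpha^{\sharp}$ would be a nonvanishing parallel vector field on $X$ (parallel implies constant norm, and $\alpha\neq 0$ implies $|\alpha^{\sharp}|$ is a positive constant). By the Hopf-type consequence recalled above, this would force $\chi(X)=0$, contradicting the assumption $\chi(X)\neq 0$. Hence $E_{0}=\{0\}$, and from the definition $H_{0}=\{\alpha\in\Omega^{1}(X)\mid (\alpha,\alpha_{0})=0\ \forall\alpha_{0}\in E_{0}\}=\Omega^{1}(X)$.

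Substituting $H_{0}=\Omega^{1}(X)$ into the general variational formula
\[
\lambda^{(1)}_{1}=\inf\{R(\alpha)\mid \alpha\in H_{0},\ \|\alpha\|_{L^{2}(X)}\neq 0\},
\]
recalled in the preliminaries yields the stated equality. There is no real obstacle here; the only subtlety is the correspondence between parallel $1$-forms and parallel vector fields, which is immediate from the metric compatibility of $\nabla$, so the argument is essentially a direct application of the Hopf theorem.
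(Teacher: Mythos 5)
Your argument is correct and is essentially identical to the paper's proof: both use the Hopf/Poincar\'e--Hopf consequence that a nonzero parallel $1$-form (equivalently, via the musical isomorphism, a nowhere-vanishing parallel vector field) would force $\chi(X)=0$, hence $E_{0}=\{0\}$ and $H_{0}=\Omega^{1}(X)$, after which the general variational characterization gives the claim. You merely spell out the correspondence between parallel $1$-forms and parallel vector fields more explicitly than the paper does.
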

\begin{proof}
Since  $\chi(X)\neq 0$, the manifold admits no nontrivial parallel $1$-forms--otherwise $\chi(X)$ would be zero. Hence
$$E_{0}=\ker\na\cap\Om^{1}(X)=0,$$
and therefore $H_{0}=\Om^{1}(X)$. We then get 
$$\la_{1}^{(1)}=\inf_{\a\neq 0} R(\a).$$

\end{proof}

	\subsection{A Poincar\'{e}-Sobolev Inequality under Curvature Bounds}
	In this section, we recall a Poincaré-type inequality (see \cite{Ber,Che}). For $1 \leq q$, let $p$ satisfy $1 \leq p \leq \frac{nq}{n-q}$ and $p < \infty$. Define the Sobolev constant $\Sigma(n,p,q)$ of the canonical unit sphere $S^n$ by
	\[
	\Sigma(n,p,q) = \sup\left\{ \frac{\|f\|_{L^p}}{\|df\|_{L^q}} : f \in L^q_1(S^n),\ f \neq 0,\ \int_{S^n} f = 0 \right\}.
	\]
	Suppose a closed Riemannian manifold $(X,g)$ satisfies
	\[
	\operatorname{Ric}(g) \geq -(n-1)\kappa \quad \text{and} \quad \operatorname{diam}(X) \leq D(g).
	\]
	Then there exists a positive number $R(\Lambda) = \frac{D(g)}{\Lambda C(\Lambda)}$, where $\Lambda = \sqrt{\kappa D^2}$ (see \cite[Appendix I, Theorem 2]{Ber}), and $C(\Lambda)$ is the unique positive root of the equation
	\[
	x \int_0^\Lambda (\cosh t + x \sinh t)^{n-1} dt = \int_0^\pi \sin^{n-1} t \, dt,
	\]
	such that the following Poincaré inequality holds (see \cite[Page 397]{Ber} or \cite[Theorem 5.1]{Che}).
	
	\begin{theorem}\label{T4}
		Let $(X,g)$ be a closed $n$-dimensional smooth Riemannian manifold. Then for each $1 \leq p \leq \frac{nq}{n-q}$, $p < \infty$, and $f \in L^q_1(X)$, we have
		\[
		\left\| f - \bar{f} \right\|_{L^p(X)} \leq S_{p,q} \|df\|_{L^q(X)},
		\]
		where $$\bar{f}= \frac{1}{\operatorname{Vol}(g)} \int_X f,$$ and 
		\[
		S_{p,q} = \left( \frac{\operatorname{Vol}(g)}{\operatorname{Vol}(S^n)} \right)^{\frac{1}{p} - \frac{1}{q}} R(\Lambda) \Sigma(n,p,q).
		\]
	\end{theorem}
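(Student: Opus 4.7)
The plan is to follow the classical Gallot--Li--Yau strategy: represent $f - \bar f$ as an averaged integral of $df$ along minimizing geodesics, and then use Bishop-type volume comparison against the constant-curvature model space to reduce the estimate to a known Poincaré--Sobolev inequality on the round sphere $S^n$. Although the result is recalled from \cite{Ber,Che}, it is instructive to outline the chain of estimates that produce the constant $S_{p,q}$, since the specific shape of this constant will be essential for the later applications.

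First I would assume $f$ is smooth with $\bar f = 0$ and, for each $x \in X$, write
$$
f(x) = \frac{1}{\operatorname{Vol}(g)} \int_X \bigl( f(x) - f(y) \bigr) \, dvol_g(y),
$$
and bound $|f(x) - f(y)| \leq \int_0^{d(x,y)} |df(\gamma_{xy}(t))|\, dt$ along a minimizing geodesic $\gamma_{xy}$. A Fubini-type exchange in polar coordinates at $x$ via the exponential map rewrites the double integral $\iint |f(x)-f(y)|\, dvol_g(x)\, dvol_g(y)$ as an integral of $|df|$ against a geometric kernel whose size is governed by the Jacobian of $\exp_x$.

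Next I would invoke Bishop's comparison theorem: under the hypothesis $\operatorname{Ric}(g) \geq -(n-1)\kappa$, the Jacobian of $\exp_x$ is dominated by $\bigl(\sinh(\sqrt{\kappa}\,t)/\sqrt{\kappa}\bigr)^{n-1}$ times an angular factor, i.e.\ the Jacobian on the simply connected model space of constant sectional curvature $-\kappa$, and the bound $\operatorname{diam}(X) \leq D(g)$ truncates the radial integration at $t = D(g)$. A symmetrization/rearrangement argument of Lévy--Gromov type then compares the resulting kernel with its counterpart on $S^n$; the matching condition between the truncated hyperbolic radial integral, expressed in the rescaled variable $\Lambda = \sqrt{\kappa D^2}$, and the total spherical mass $\int_0^\pi \sin^{n-1} t\, dt$ is exactly the transcendental equation
$$
x\int_0^\Lambda (\cosh t + x\sinh t)^{n-1}\, dt = \int_0^\pi \sin^{n-1} t\, dt,
$$
whose unique positive root $C(\Lambda)$ quantifies the deviation of the Bishop model from $S^n$; this step produces the scaling factor $R(\Lambda) = D(g)/(\Lambda C(\Lambda))$.

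Finally, with the problem reduced to one on $S^n$, I would apply the sharp spherical Poincaré--Sobolev inequality, whose constant is by definition $\Sigma(n,p,q)$, and account for the volume ratio $(\operatorname{Vol}(g)/\operatorname{Vol}(S^n))^{1/p - 1/q}$ that appears when transferring normalized $L^p$ and $L^q$ norms between the two spaces; combining the three factors recovers the stated $S_{p,q}$. The step I expect to be the main obstacle is the second one: performing the rearrangement carefully enough to obtain \emph{precisely} $C(\Lambda)$ as the root of that transcendental equation, rather than a softer comparison constant, requires the sharp Lévy--Gromov isoperimetric comparison in the hyperbolic model. Once this calibration is in place, the remaining passages reduce to routine applications of Hölder's inequality and the known sphere estimate, and a density argument extends the inequality from smooth $f$ to all of $L^q_1(X)$.
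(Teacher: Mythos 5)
First, a point of comparison: the paper does not prove Theorem \ref{T4} at all --- it is recalled verbatim from B\'erard \cite{Ber} and Chen \cite{Che}, with the constant $R(\Lambda)=D(g)/(\Lambda C(\Lambda))$ quoted from \cite[Appendix I, Theorem 2]{Ber}. So the relevant question is whether your sketch is a viable reconstruction of that cited proof, and here I see a genuine structural gap. You have spliced together two different proof strategies that do not actually connect. The representation formula $f(x)-f(y)=\int_0^{d(x,y)}|df(\gamma_{xy}(t))|\,dt$ followed by Bishop comparison in polar coordinates is the Buser/Croke/Li--Yau route: it does yield a Poincar\'e inequality, but with a constant of the form $c(n)\,D\,e^{(n-1)\Lambda}$ (essentially what Proposition \ref{P1} extracts downstream), and it cannot produce the factorization $R(\Lambda)\,\Sigma(n,p,q)$ with the \emph{sharp spherical} Sobolev constant, nor the full range $1\le p\le \frac{nq}{n-q}$ for a single kernel estimate. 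The constant $S_{p,q}$ in the statement is the signature of the Gallot/B\'erard--Besson--Gallot \emph{symmetrization} proof, which never uses the geodesic representation formula: one first proves the isoperimetric comparison $h_{(X,g)}\ge \frac{1}{R(\Lambda)}h_{(S^n,\mathrm{can})}$ between normalized isoperimetric profiles, and then transfers the model inequality from $S^n$ to $X$ by applying the co-area formula to the decreasing rearrangement of $f$ itself onto the sphere. In that transfer, $\|df^*\|_{L^q(S^n)}\le R(\Lambda)\|df\|_{L^q(X)}$ (in normalized norms) while rearrangement preserves the $L^p$ size of $f$ minus a median, which is then compared to $f-\bar f$; the volume ratio to the power $\frac1p-\frac1q$ appears on un-normalizing. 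Your ``kernel rearrangement'' in step two is not how $C(\Lambda)$ arises and would not terminate.

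The step you flag as the main obstacle is indeed where the entire content lives, but it is mislocated methodologically: $C(\Lambda)$ is \emph{defined} as the root of the transcendental equation because of a Heintze--Karcher type estimate for the volume of one-sided tubes around an isoperimetric hypersurface in the constant-curvature $-\kappa$ model (this is how \cite{Ber} and \cite{GM} prove the isoperimetric comparison), not because of a calibration of the Bishop Jacobian of $\exp_x$ against the spherical one. Without that isoperimetric input, steps one and three of your outline do not meet in the middle, and no amount of care in the Fubini/polar-coordinate computation will recover the stated $S_{p,q}$. Since the paper treats this as a black box, the clean fix is either to cite it as the authors do, or to replace your step one entirely by the isoperimetric inequality and run the standard symmetrization argument; as written, the proposal does not constitute a proof of the theorem with the asserted constant.
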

		Chen \cite[Lemma 5.6]{Che} established a lower bound for $\Lambda C(\Lambda)$ as $\Lambda \to 0$. Following his approach, we derive a lower bound for $\Lambda C(\Lambda)$ that depends only on $\Lambda$ and $n$.
	
	\begin{lemma}\label{L2}
		Let $C(\Lambda)$ be the unique positive root of the equation
		\[
		x \int_0^\Lambda (\cosh t + x \sinh t)^{n-1} dt = \int_0^\pi \sin^{n-1} t \, dt.
		\]
		Then 
		\[
		\Lambda C(\Lambda) \geq a_n e^{-(n-1)\Lambda} > 0
		\]
		for some constant $a_n$ depending only on $n$.
	\end{lemma}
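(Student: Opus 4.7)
The plan is to introduce $u := \Lambda C(\Lambda)$ and show the scalar inequality $u(1+u)^{n-1} \geq I_n\, e^{-(n-1)\Lambda}$, where $I_n := \int_0^\pi \sin^{n-1}t\,dt$; the desired lower bound on $u$ then follows by an elementary two-case analysis. The key point is to bound the integrand in the defining equation from above by a quantity in which the product $C(\Lambda)\Lambda = u$ emerges naturally on the right.

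First I would establish the pointwise estimate $\cosh t + C(\Lambda)\sinh t \leq e^\Lambda (1+u)$ for $t \in [0,\Lambda]$. This follows from the two elementary facts $\cosh t \leq \cosh\Lambda \leq e^\Lambda$ and $\sinh t \leq t\cosh t$ (the latter equivalent to $\tanh t \leq t$ for $t \geq 0$): combining them gives
\[
\cosh t + C(\Lambda)\sinh t \leq \cosh t \bigl(1 + C(\Lambda)t\bigr) \leq e^\Lambda\bigl(1 + C(\Lambda)\Lambda\bigr) = e^\Lambda(1+u).
\]
Raising to the $(n-1)$th power, integrating over $[0,\Lambda]$, and multiplying by $C(\Lambda)$ yields
\[
I_n = C(\Lambda)\int_0^\Lambda \bigl(\cosh t + C(\Lambda)\sinh t\bigr)^{n-1}dt \leq C(\Lambda)\Lambda\,(1+u)^{n-1} e^{(n-1)\Lambda} = u(1+u)^{n-1} e^{(n-1)\Lambda},
\]
which is the advertised inequality after rearrangement.

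To conclude, I would split into two cases. If $u \leq 1$, then $(1+u)^{n-1} \leq 2^{n-1}$, so $u \geq (I_n/2^{n-1})\,e^{-(n-1)\Lambda}$. If instead $u \geq 1$, then since $e^{-(n-1)\Lambda} \leq 1$ one has trivially $u \geq 1 \geq e^{-(n-1)\Lambda}$. Setting $a_n := \min\{1,\, I_n/2^{n-1}\}$, a positive constant depending only on $n$, gives $\Lambda C(\Lambda) = u \geq a_n e^{-(n-1)\Lambda}$ in either case.

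I do not anticipate a serious obstacle. The only choice requiring a moment of thought is the use of $\sinh t \leq t\cosh t$ rather than the cruder $\sinh t \leq e^t$; this is precisely what produces the factor $C(\Lambda)\Lambda$ (rather than just $C(\Lambda)$) on the right, making the resulting inequality homogeneous in $u = \Lambda C(\Lambda)$ and thus directly invertible.
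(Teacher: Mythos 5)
Your proof is correct and follows essentially the same strategy as the paper: bound the integrand above by $e^{\Lambda}(1+\Lambda C(\Lambda))$ so that, with $u=\Lambda C(\Lambda)$ and $I_n=\int_0^\pi \sin^{n-1}t\,dt$, one gets $I_n \leq u(1+u)^{n-1}e^{(n-1)\Lambda}$. The only difference is how this implicit inequality is inverted: the paper first observes $u \leq I_n$ (since the integrand is at least $1$) and substitutes that into the factor $(1+u)^{n-1}$ to obtain the explicit constant $a_n=I_n(1+I_n)^{1-n}$, whereas you use a two-case split on whether $u\leq 1$; both are valid.
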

	
	\begin{proof}
		Let $\omega_n = \int_0^\pi \sin^{n-1} t \, dt$. Then
		\[
		\omega_n = C(\Lambda) \int_0^\Lambda (\cosh t + C(\Lambda) \sinh t)^{n-1} dt \geq \Lambda C(\Lambda).
		\]
		On the other hand, for any $\Lambda > 0$, we have
		\begin{align*}
		\omega_n &= C(\Lambda) \int_0^\Lambda (\cosh t + C(\Lambda) \sinh t)^{n-1} dt \\
		&\leq C(\Lambda) \int_0^\Lambda \left( \frac{e^\Lambda + e^{-\Lambda}}{2} + C(\Lambda) \frac{e^\Lambda - e^{-\Lambda}}{2} \right)^{n-1} dt \\
		&\leq \Lambda C(\Lambda) \cosh^{n-1}(\Lambda) \left( 1 + \omega_n \frac{\tanh(\Lambda)}{\Lambda} \right)^{n-1} \\
		&\leq \Lambda C(\Lambda) e^{(n-1)\Lambda} (1 + \omega_n)^{n-1}.
		\end{align*}
		Here we use the fact that $\frac{\tanh x}{x}$ is monotonically decreasing for $x > 0$, and that
		\[
		\sup_{x > 0} \frac{\tanh x}{x} = 1.
		\]
		Hence, for some constant $a_n = \omega_n (1 + \omega_n)^{1-n}$ depending only on $n$, we obtain
		\[
		\Lambda C(\Lambda) \geq a_n e^{-(n-1)\Lambda} > 0.
		\]
	\end{proof}
	
	\begin{proposition}\label{P1}
		Let $(X,g)$ be a closed $n$-dimensional smooth Riemannian manifold with
		\[
		\operatorname{Ric}(g) \geq -(n-1)\kappa \quad \text{and} \quad \operatorname{diam}(X) \leq D(g).
		\]
		If $f \in L^2_1(X)$, then
		\[
		\|f\|_{\frac{2n}{n-2}} \leq \|f\|_2 + C(n) D e^{(n-1)\sqrt{\kappa D^{2}}}  \|df\|_2.
		\]
	\end{proposition}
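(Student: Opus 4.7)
The plan is to apply the Poincar\'{e}-Sobolev inequality of Theorem \ref{T4} with the critical Sobolev pair $p = \tfrac{2n}{n-2}$ and $q = 2$, split $f = \bar{f} + (f - \bar{f})$, and estimate the two pieces separately. This exponent pair is natural because $\tfrac{nq}{n-q} = \tfrac{2n}{n-2}$, so $p$ attains the endpoint permitted by the theorem, and the relation $\tfrac{1}{p} - \tfrac{1}{q} = -\tfrac{1}{n}$ will guarantee that the volume factor appearing in $S_{p,q}$ cancels cleanly against the normalization of the $\|\cdot\|_p$-norms used in this paper.

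For the oscillation piece, Theorem \ref{T4} provides the constant $S_{p,q} = (\operatorname{Vol}(g)/\operatorname{Vol}(S^n))^{1/p - 1/q} R(\Lambda) \Sigma(n, \tfrac{2n}{n-2}, 2)$. Converting from the $L^p(X)$-norms in Theorem \ref{T4} to the normalized $\|\cdot\|_p$-norms of this paper introduces a further volume factor of $\operatorname{Vol}(g)^{1/q - 1/p} = \operatorname{Vol}(g)^{1/n}$, which precisely cancels the $\operatorname{Vol}(g)^{-1/n}$ inside $S_{p,q}$. What remains is a purely dimensional constant times $R(\Lambda)$. Setting $\Lambda = \sqrt{\kappa D^2}$ and invoking Lemma \ref{L2} gives
\[
R(\Lambda) \,=\, \frac{D}{\Lambda C(\Lambda)} \,\leq\, \frac{D}{a_n}\, e^{(n-1)\sqrt{\kappa D^2}},
\]
so that the oscillation is bounded by $C(n)\, D\, e^{(n-1)\sqrt{\kappa D^2}}\, \|df\|_2$.

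For the mean piece, $\bar{f}$ is a constant, so under the normalized $L^p$-norms we have $\|\bar{f}\|_{2n/(n-2)} = |\bar{f}|$. Jensen's inequality applied to the probability measure $dvol_g/\operatorname{Vol}(g)$ yields $|\bar{f}| \leq \|f\|_1 \leq \|f\|_2$. Combining this with the oscillation estimate by the triangle inequality gives
\[
\|f\|_{\frac{2n}{n-2}} \,\leq\, \|\bar{f}\|_{\frac{2n}{n-2}} + \|f - \bar{f}\|_{\frac{2n}{n-2}} \,\leq\, \|f\|_2 + C(n)\, D\, e^{(n-1)\sqrt{\kappa D^2}}\, \|df\|_2.
\]

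I do not anticipate any genuine analytic obstacle: the argument is essentially a packaging of Theorem \ref{T4} and Lemma \ref{L2} at the critical Sobolev exponent. The only real point of care is the volume bookkeeping described above; once one verifies that the exponents combine to $-\tfrac{1}{n} + \tfrac{1}{2} - \tfrac{n-2}{2n} = 0$, the estimate becomes independent of $\operatorname{Vol}(g)$, which is essential for obtaining a bound depending only on $n$, $\kappa$, and $D$.
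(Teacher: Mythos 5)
Your proposal is correct and follows essentially the same route as the paper's proof: decompose $f=\bar f+(f-\bar f)$, apply Theorem \ref{T4} with $p=\tfrac{2n}{n-2}$, $q=2$ together with Lemma \ref{L2} to the oscillation, and bound $|\bar f|\leq\|f\|_2$. Your explicit volume bookkeeping is a welcome detail that the paper's terser proof leaves implicit.
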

	\begin{proof}
	By Theorem \ref{T4} and Lemma \ref{L2}, we get
	\begin{equation*}
	\begin{split}
	\|f\|_{\frac{2n}{n-2}}&\leq\|f-\bar{f}\|_{\frac{2n}{n-2}}+\|\bar{f}\|_{\frac{2n}{n-2}}\\
	&\leq C(n)De^{(n-1)\sqrt{\kappa D^{2}}}+|\bar{f}|\\
	&\leq C(n)De^{(n-1)\sqrt{\kappa D^{2}}}+\|f\|_{2}.\\
	\end{split}
	\end{equation*}
		
	\end{proof}	
	Applying \cite[Appendix V, Theorem 3]{Ber}, we obtain the following result:
	
	\begin{theorem}\label{T5}(\cite[Theorem 5.2]{Che})
		Let $(X,g)$ be a closed $n$-dimensional smooth Riemannian manifold satisfying the Sobolev inequality:
		\[
		\|f\|_{\frac{2n}{n-2}} \leq \|f\|_2 + C_s \|df\|_2, \quad \text{for all } f \in L^2_1(X).
		\]
		If $u\in L^2_1(X)$ is a nonnegative continuous function satisfying
		\[
		u \Delta_d u \leq c u^2
		\]
		in the sense of distributions for some constant $c > 0$, then
		\[
		\|u\|_\infty \leq \exp\left\{ C(n) \sqrt{c} C_s \right\} \|u\|_2.
		\]
	\end{theorem}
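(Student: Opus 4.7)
The plan is a Moser iteration. Setting $q = \frac{2n}{n-2} > 2$, I first derive a Caccioppoli-type estimate for powers of $u$: for each $p\geq 1$, testing the distributional inequality $u\Delta_d u\leq cu^2$ against $u^{2p-2}$ (after truncating $u$ at level $L$ to ensure admissibility and then letting $L\to\infty$) and integrating by parts yields $(2p-1)\int_X u^{2p-2}|du|^2 \leq c\int_X u^{2p}$; in the normalized norms this reads $\|u^{p-1}du\|_2 \leq \sqrt{c/(2p-1)}\,\|u\|_{2p}^p$.

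Next I apply the hypothesized Sobolev inequality to $u^p$. Since $d(u^p) = pu^{p-1}du$ and $\|u^p\|_q = \|u\|_{pq}^p$, $\|u^p\|_2 = \|u\|_{2p}^p$, this gives
\[
\|u\|_{pq}^p \leq \|u\|_{2p}^p + C_s\, p\sqrt{\tfrac{c}{2p-1}}\,\|u\|_{2p}^p,
\]
and since $p/\sqrt{2p-1}\leq\sqrt{p}$ for $p\geq 1$ (equivalent to $p\geq 1$), I obtain the clean recursion
\[
\|u\|_{pq} \leq \bigl(1 + C_s\sqrt{cp}\bigr)^{1/p}\|u\|_{2p}.
\]

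Finally I iterate with $p_k = (q/2)^k$, noting that $p_k q = 2p_{k+1}$, so $A_k := \|u\|_{2p_k}$ satisfies $A_{k+1}\leq(1+C_s\sqrt{cp_k})^{1/p_k}A_k$. Taking logarithms and using $\log(1+x)\leq x$ yields
\[
\log A_{k+1} - \log A_k \leq \frac{C_s\sqrt{cp_k}}{p_k} = C_s\sqrt{c}\,(q/2)^{-k/2};
\]
summing this convergent geometric series from $k=0$ (ratio $(q/2)^{-1/2}<1$) gives $\log\|u\|_\infty - \log\|u\|_2 \leq C(n)C_s\sqrt{c}$ with $C(n) = \bigl(1 - \sqrt{(n-2)/n}\bigr)^{-1}$, which is exactly the desired bound. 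The main technical obstacle is the low regularity of $u$ (merely continuous and in $L^2_1$), which prevents using $u^{2p-2}$ directly as a test function when $p$ is large; this is handled in the standard way by running the argument on the bounded truncation $u_L = \min(u,L) \in L^\infty \cap L^2_1$, establishing the recursion uniformly in $L$, and then passing to the limit by monotone convergence.
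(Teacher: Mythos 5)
Your proof is correct, and it is essentially the standard Moser iteration argument that underlies the cited result: the paper itself gives no proof of Theorem \ref{T5}, quoting it from Chen and B\'erard, whose proofs run exactly this scheme (Caccioppoli estimate for $u^{2p-1}$, Sobolev applied to $u^p$, iteration along $p_k=(n/(n-2))^k$ with the convergent geometric series producing the exponential constant). The only cosmetic remark is that the truncation at level $L$ is superfluous here, since $u$ is continuous on a closed manifold and hence already bounded, so $u^{2p-2}$ is an admissible test function directly.
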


\subsection{Anderson's Counterexample to Li-Yau Type Estimates}
In this section, we aim to demonstrate that there exist manifolds for which the lower bound estimates of the eigenvalues of the rough Laplacian acting on $1$-forms do not satisfy the Li-Yau type inequality.

We first recall the following key result.
\begin{lemma}(cf.\cite{Tis,Yam88})
	Let $(X, g)$ be a closed $n$-dimensional smooth Riemannian manifold such that
	\[
	\dim\left(\ker\nabla \cap \Omega^{1}(X)\right) = m > 0.
	\]
	Then $X$ is a fiber bundle over the $m$-torus.
\end{lemma}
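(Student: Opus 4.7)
The plan is to extract from the parallel 1-forms an $m$-tuple of closed, nowhere-vanishing, pointwise linearly independent 1-forms on $X$, perturb them to have integer periods in the manner of Tischler, and integrate to produce a submersion onto an $m$-torus, which Ehresmann's theorem promotes to a locally trivial fiber bundle.

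First I would unpack the condition $\nabla\omega=0$. Antisymmetrization gives $d\omega=0$, while $\nabla|\omega|^{2}=2\langle\nabla\omega,\omega\rangle=0$ forces $|\omega|$ to be constant on $X$, so a nonzero parallel 1-form is nowhere vanishing. If $\omega_{1},\ldots,\omega_{m}$ is a basis of $\ker\nabla\cap\Omega^{1}(X)$, then they are pointwise linearly independent on $X$: any relation $\sum c_{i}\omega_{i}(x_{0})=0$ at a single point propagates by parallel transport to every point of the connected manifold $X$, contradicting their global linear independence. Thus the hypothesis delivers $m$ closed, nowhere-vanishing, pointwise linearly independent 1-forms on the closed manifold $X$.

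Next I would rationalize the periods. Each $\omega_{i}$ represents a class $[\omega_{i}]\in H^{1}(X,\mathbb{R})$, and density of $H^{1}(X,\mathbb{Q})$ in $H^{1}(X,\mathbb{R})$ lets me approximate $[\omega_{i}]$ by a rational class $[\omega_{i}']$ whose harmonic representative $\omega_{i}'$ is $C^{0}$-close to $\omega_{i}$ (the harmonic projection depends linearly and continuously on the cohomology class). Pointwise linear independence of an $m$-tuple of continuous 1-forms is an open condition on the compact manifold $X$, so sufficiently close $\omega_{i}'$ remain pointwise linearly independent. After multiplying all $\omega_{i}'$ by a common positive integer (the l.c.m.\ of the denominators of their periods), I may assume $\int_{\gamma}\omega_{i}'\in\mathbb{Z}$ for every $\gamma\in H_{1}(X,\mathbb{Z})$ and every $i$. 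Fixing a basepoint $x_{0}\in X$, I then define
\[
F\colon X\longrightarrow\mathbb{R}^{m}/\mathbb{Z}^{m}=T^{m},\qquad F(x)=\left(\int_{x_{0}}^{x}\omega_{1}',\ldots,\int_{x_{0}}^{x}\omega_{m}'\right)\pmod{\mathbb{Z}^{m}},
\]
which is well-defined independent of the integration path by integrality of the periods, and is a submersion since $dF=(\omega_{1}',\ldots,\omega_{m}')$ has rank $m$ at every point. Because $X$ is compact, Ehresmann's fibration theorem presents $F$ as a locally trivial fiber bundle with base $T^{m}$.

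The main obstacle is the perturbation step, where one must simultaneously preserve pointwise linear independence and arrange exact integrality of the joint period lattice. The former is automatic once the perturbation is $C^{0}$-small thanks to openness, while the latter requires choosing rational approximating cohomology classes together with a single common integer rescaling; the rest is a routine unpacking of parallel transport and Ehresmann's theorem. Note that without such a perturbation the natural integration map would only descend to a quotient $\mathbb{R}^{m}/\overline{\Lambda}$ by the closure of the period subgroup, which may be a torus of dimension strictly less than $m$, so Tischler's trick is what pins the base down to $T^{m}$.
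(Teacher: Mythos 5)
Your proposal is correct and follows essentially the same route as the paper, which simply invokes Tischler's argument to produce the submersion onto $T^m$. In fact your write-up is the more careful one: you spell out the pointwise independence via parallel transport, the rational-period perturbation, and the Ehresmann step that the paper's one-line citation of \cite{Tis} leaves implicit (the paper's literal claim $\varphi_i^{*}(dt)=\omega_i$ only holds after such a perturbation).
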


\begin{proof}
	Let $\ker\nabla \cap \Omega^{1}(X) = \operatorname{span}\{\omega_1, \dots, \omega_m\}$. By an argument of Tischler \cite{Tis}, there exist differentiable functions $\varphi_i : X \to S^1$, $1 \leq i \leq m$, such that $\varphi_i^*(dt) = \omega_i$. The map $\varphi = (\varphi_1, \dots, \varphi_m)$ is a submersion from $X$ onto an $m$-torus $T^m$, and $X$ is a fiber bundle over $T^m$.
\end{proof}  

\begin{definition}\label{D1}
Let $(X, g)$ be a closed $n$-dimensional smooth Riemannian manifold with 
\[
\operatorname{Ric}(g)\geq -(n-1)\kappa,\ and\ {\rm{diam}}(X)\leq D.
\]
We say the first eigenvalue of the rough Laplacian on $1$-forms satisfies Li-Yau type inequality if there is a positive constant $C(n)$ such that 
$$\la^{(1)}_{1}D^2\geq C(n)e^{-C(n)\sqrt{\kappa D^2}}.$$
\end{definition}

We observe that if a closed manifold admits a metric such that $\kappa D^2$ is small and  $\la_{1}^{(1)}$ satisfies the Li-Yau type inequality, then the closed manifold is a fiber bundle over $T^{b_{1}}$.

\begin{proposition}\label{P5}
	Let $(X, g)$ be a closed $n$-dimensional smooth Riemannian manifold with $b_1(X) > 0$. Suppose that the Ricci curvature satisfies
	\[
	\operatorname{Ric}(g)\geq -(n-1)\kappa,\ and \ {\rm{diam}}(X)\leq D
	\]
	and the first eigenvalue of the rough Laplacian on $1$-forms satisfies Li-Yau type inequality. Then there is a positive constant $c(n)$ such that if $\kappa D^2\leq c(n)$, then we have
	\[
	\ker\nabla \cap \Omega^1(X) = \mathcal{H}_d^1(X).
	\]
	In particular, $X$ is a fiber bundle over a $b_1$-torus.
\end{proposition}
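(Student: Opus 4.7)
The plan is to combine the Weitzen\"ock formula on $1$-forms with the Rayleigh quotient characterization of $\lambda^{(1)}_{1}$ recalled in Subsection 2.1 and the assumed Li-Yau lower bound. Recall that on $1$-forms the Hodge Laplacian and the rough Laplacian are related by $\De_{d}=\ov{\De}+\operatorname{Ric}$. This identity already gives the easy inclusion $\ker\na\cap\Om^{1}(X)\subseteq\H^{1}_{d}(X)$: any parallel $1$-form is closed and co-closed, hence harmonic. The work lies in the reverse inclusion.

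For the reverse inclusion, I would fix $\a\in\H^{1}_{d}(X)$ and decompose it $L^{2}$-orthogonally as $\a=\a_{0}+\a_{1}$ with $\a_{0}\in E_{0}=\ker\na\cap\Om^{1}(X)$ and $\a_{1}\in H_{0}$. Both $\a$ and $\a_{0}$ are harmonic, so $\a_{1}$ is a harmonic $1$-form that is $L^{2}$-orthogonal to every parallel $1$-form. Assume towards a contradiction that $\a_{1}\neq 0$. The Weitzenb\"ock formula gives $\ov{\De}\a_{1}=-\operatorname{Ric}(\a_{1})$; pairing with $\a_{1}$, integrating, and using $\operatorname{Ric}(g)\geq-(n-1)\kappa$ yields
\[
\int_{X}|\na\a_{1}|^{2}=-\int_{X}\langle\operatorname{Ric}(\a_{1}),\a_{1}\rangle\leq (n-1)\kappa\int_{X}|\a_{1}|^{2}.
\]
Since $\a_{1}\in H_{0}\setminus\{0\}$, the variational characterization of $\la^{(1)}_{1}$ recalled in Subsection 2.1 gives $\la^{(1)}_{1}\leq R(\a_{1})\leq(n-1)\kappa$.

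Now I would invoke the Li-Yau type hypothesis $\la^{(1)}_{1}D^{2}\geq C(n)e^{-C(n)\sqrt{\kappa D^{2}}}$. Setting $\La=\sqrt{\kappa D^{2}}$, the two bounds combine to
\[
C(n)e^{-C(n)\La}\leq(n-1)\La^{2}.
\]
The left-hand side is bounded below by $C(n)e^{-C(n)}$ for $\La\leq 1$, while the right-hand side tends to $0$ as $\La\to 0$. Hence there exists a constant $c(n)>0$, depending only on $n$ and $C(n)$, such that the inequality fails whenever $\kappa D^{2}\leq c(n)$. This contradicts $\a_{1}\neq 0$, so every harmonic $1$-form is parallel and $\H^{1}_{d}(X)=\ker\na\cap\Om^{1}(X)$.

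For the ``in particular'' statement, since $b_{1}(X)=\dim\H^{1}_{d}(X)>0$, we conclude $\dim(\ker\na\cap\Om^{1}(X))=b_{1}(X)>0$, and the Tischler--Yamaguchi lemma stated just above Definition \ref{D1} delivers the fiber bundle structure of $X$ over $T^{b_{1}(X)}$. The only delicate point is the extraction of an admissible smallness threshold $c(n)$ from the comparison between $C(n)e^{-C(n)\La}$ and $(n-1)\La^{2}$; this is elementary calculus and is the only place where quantitative care is needed.
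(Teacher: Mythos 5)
Your proof is correct and follows essentially the same route as the paper's: the same orthogonal decomposition of a harmonic $1$-form into its parallel part and its $H_{0}$-component, the same Bochner/Weitzenb\"ock estimate $\|\na\a_{1}\|^{2}\leq(n-1)\kappa\|\a_{1}\|^{2}$, and the same comparison of the Rayleigh-quotient lower bound $C(n)e^{-C(n)\sqrt{\kappa D^{2}}}$ against $(n-1)\kappa D^{2}$ to force the non-parallel part to vanish for small $\kappa D^{2}$. The only difference is cosmetic: you spell out the elementary calculus step producing $c(n)$ slightly more explicitly than the paper does.
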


\begin{proof}
	Note that both $\ker\nabla \cap \Omega^1$ and $\mathcal{H}^1_d := \ker \Delta_d \cap \Omega^1$ are finite-dimensional linear spaces, and
	\[
	\ker\nabla \cap \Omega^1 \subset \mathcal{H}^1_d.
	\]
	We decompose
	\[
	\mathcal{H}^1_d = \left(\ker\nabla \cap \Omega^1\right) \oplus \left[ \left(\ker\nabla \cap \Omega^1\right)^\perp \cap \mathcal{H}^1_d \right].
	\]
	It suffices to show that the second summand is trivial.
	
	Suppose, for contradiction, that
	\[
	\left(\ker\nabla \cap \Omega^1\right)^\perp \cap \mathcal{H}^1_d \neq \{0\}.
	\]
	Then there exists a nonzero $1$-form $\alpha \in \mathcal{H}^1_d$ such that $\nabla \alpha \neq 0$. Write
	\[
	\alpha = \alpha_0 + \alpha_0^\perp,
	\]
	where $\alpha_0 \in \ker\nabla \cap \Omega^1$ and $\alpha_0^\perp \in \left(\ker\nabla \cap \Omega^1\right)^\perp$. Clearly, $\alpha_0^\perp \neq 0$. By the Bochner formula,
	\[
	0 = \Delta_d \alpha = \Delta_d \alpha_0^\perp = \nabla^*\nabla \alpha_0^\perp + \operatorname{Ric}(\alpha_0^\perp).
	\]
	Taking the inner product with $\alpha_0^\perp$ and integrating gives
	\[
	D^2\|\nabla \alpha_0^\perp\|^2 \leq (n-1)\kappa D^2  \|\alpha_0^\perp\|^2.
	\]
	On the other hand, the eigenvalue assumption implies
	\[
	D^2\|\nabla \alpha_0^\perp\|^2 \geq \lambda_1^{(1)}	D^2 \|\alpha_0^\perp\|^2 \geq C(n) e^{-C(n)\sqrt{\kappa D^2}} \|\alpha_0^\perp\|^2.
	\]
	Combining these inequalities yields
	\[
	\left( C(n) e^{-C(n)\sqrt{\kappa D^2}}- (n-1)\kappa D^2\right)  \|\alpha_0^\perp\|^2 \leq 0.
	\]
		Now, choosing $c(n)$ small enough ensures that if $\kappa D^2\leq c(n)$ then 
	\[
	C(n) e^{-C(n)\sqrt{\kappa D^2}}>(n-1)\kappa D^2,
	\]
	which forces $\alpha_0^\perp = 0$, a contradiction. Hence,
	\[
	\ker\nabla \cap \Omega^1 = \mathcal{H}^1_d.
	\]
\end{proof}
Andersen in \cite{An94} gave explicit counterexamples to Gromov's conjecture 
\begin{theorem}(\cite[Theorem 0.4]{An94})\label{T1}
	For any $n \geq 4$, $k \leq n-1$, and $\varepsilon > 0$, there exist closed $n$-manifolds $X$ satisfying
	\[
	\operatorname{diam}(X)^2 \cdot \|\mathrm{Ric}\|_{\infty} \leq \varepsilon, \quad \text{and} \quad b_1(X) = k,
	\]
	such that no cover of $X$ fibers over $S^1$.
\end{theorem}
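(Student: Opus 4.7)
The plan is to construct, for any $n \geq 4$, $1 \leq k \leq n-1$, and $\varepsilon > 0$, a closed $n$-manifold $X$ realizing three requirements simultaneously: first Betti number $b_{1}(X) = k$, the scale-invariant bound $\operatorname{diam}(X)^{2}\|\operatorname{Ric}\|_{\infty} \leq \varepsilon$, and topological rigidity preventing any finite cover from fibering over $S^{1}$. The case $k = 0$ reduces to rescaling any simply-connected $n$-manifold, so the substantive range is $1 \leq k \leq n - 1$.

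First I would fix the topology. By Stallings's fibering theorem, the property ``no finite cover of $X$ fibers over $S^{1}$'' translates to the algebraic statement that no finite-index subgroup of $\pi_{1}(X)$ surjects onto $\mathbb{Z}$ with finitely generated kernel. I would realize $X$ as the total space of a principal torus bundle $T^{k} \hookrightarrow X \to B^{n-k}$ over an aspherical base $B$ with $b_{1}(B) = 0$ and with Euler class in $H^{2}(B;\mathbb{Z}^{k})$ chosen sufficiently generic. The Gysin sequence then yields $b_{1}(X) = k$, while the Bieri--Neumann--Strebel invariant of $\pi_{1}(X)$ can be arranged to contain no open half-space, obstructing Stallings fibration in every finite cover.

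Next I would construct a family $\{g_{\delta}\}$ of almost Ricci-flat metrics by collapsing the fibers. Fix a principal connection on the bundle and equip $X$ with the adiabatic Kaluza--Klein metric in which the fibers are rescaled to size $\delta$ while the base retains a fixed metric of bounded geometry. The O'Neill formulas for Riemannian submersions with totally geodesic fibers then give
\[
\operatorname{Ric}(g_{\delta}) = \operatorname{Ric}_{B} + O(\delta^{2})
\]
up to terms controlled by the curvature of the connection, so that after rescaling $X$ to unit diameter the quantity $\operatorname{diam}(X)^{2}\|\operatorname{Ric}\|_{\infty}$ becomes $O(\delta^{2})$. Choosing $\delta$ small enough yields the bound $\leq \varepsilon$. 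An alternative route is to invoke Cheeger--Fukaya--Gromov $F$-structure theory combined with Ricci-flow smoothing to produce metrics with uniformly bounded geometry on any manifold admitting a polarized $F$-structure of positive rank.

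The main obstacle is coordinating the topology with the geometric collapse. The algebraic condition that no finite cover admits a Stallings fibration is subtle and becomes harder to enforce as $k$ approaches $n - 1$; it must be verified directly from the fundamental group presentation or the bundle data. Simultaneously, the collapsing construction must respect the bundle structure so that no accidental parallel $1$-forms emerge in the smoothed metric that would force a fibration over a circle. Ensuring that both the Ricci curvature bound and the fibration obstruction hold for the \emph{same} metric is the technical heart of Anderson's original argument, and is what distinguishes genuine counterexamples to Gromov's conjecture from manifolds that merely collapse with bounded curvature.
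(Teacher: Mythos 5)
This theorem is not proved in the paper at all; it is quoted verbatim from Anderson \cite{An94}, so your proposal must be measured against Anderson's actual construction. Your approach --- an adiabatic Kaluza--Klein collapse of a principal torus bundle, or a polarized $F$-structure collapse --- cannot work, for a reason visible in the introduction of this very paper. Such collapses keep the sectional curvature uniformly bounded as $\delta\to 0$ (the O'Neill corrections are $O(\delta^{2})$), and Yamaguchi's theorem \cite{Yam88}, quoted in the introduction, says that a closed $n$-manifold with $\operatorname{sec}(g)\le 1$, $\operatorname{diam}(X)\le D$ and $\operatorname{Ric}(g)>-\varepsilon(n,D)$ fibers over a $b_{1}(X)$-torus; since you arrange $b_{1}(X)=k\ge 1$ and $\operatorname{diam}(X)^{2}\|\operatorname{Ric}\|_{\infty}\le\varepsilon$, your manifolds would fiber over $T^{k}$ and hence over $S^{1}$ --- exactly the conclusion the theorem must exclude. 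This is not a repairable detail: any example satisfying Anderson's theorem necessarily has unbounded sectional curvature, so no bounded-geometry collapsing scheme can produce one. There are also internal inconsistencies. With $b_{1}(B)=0$ the Serre spectral sequence gives $b_{1}(X)=k-\operatorname{rank}_{\mathbb{Q}}(\text{Euler class})$, so a ``sufficiently generic'' Euler class destroys $b_{1}(X)=k$ rather than securing it. For $\operatorname{Ric}(g_{\delta})=\operatorname{Ric}_{B}+O(\delta^{2})$ to yield the scale-invariant bound, the base itself must be almost Ricci-flat at unit diameter, which is incompatible with choosing $B$ aspherical with $b_{1}(B)=0$ and $\pi_{1}(B)$ rich enough to obstruct fibering (such a $B$, e.g.\ hyperbolic, admits no almost nonnegatively Ricci-curved metric; if instead $B$ is infranil, $X$ is again covered by a manifold fibering over $S^{1}$). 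Finally, Stallings's fibering theorem is a $3$-manifold statement: the direction you need (a fibration over $S^{1}$ forces a surjection $\pi_{1}\to\mathbb{Z}$ with finitely generated kernel) is elementary, but the converse you implicitly invoke when ``arranging'' the BNS invariant is unavailable in dimension $n\ge 4$.

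Anderson's construction is of an entirely different nature: it is a gluing (Dehn-surgery) construction, not a collapse. Roughly, one removes from a flat torus a tubular neighborhood $D^{n-1}\times S^{1}$ of a short closed geodesic and glues in a Ricci-flat, asymptotically flat model of generalized Riemannian Schwarzschild type on $D^{2}\times S^{n-2}$ (and products or quotients thereof), whose end matches the flat cylinder $\mathbb{R}^{n-1}\times S^{1}$ to high order; the interpolation region then carries Ricci curvature that is arbitrarily small relative to the diameter, while the sectional curvature of the glued core is necessarily large. Iterating such surgeries reduces $b_{1}$ to any prescribed $k\le n-1$ and changes $\pi_{1}$ so that the infinite cyclic cover associated with any character of any finite-index subgroup has infinitely generated homology (it contains infinitely many translates of an essential $(n-2)$-sphere), hence is not finitely dominated; consequently no cover fibers over $S^{1}$. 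If you wish to include an argument for this statement, reproduce that gluing construction or simply cite \cite{An94}.
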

\begin{proof}[\textbf{Proof of Proposition \ref{P2} }]
Through Proposition \ref{P5}, we can conclude that on the manifold constructed by Andersen,  the eigenvalue  $\la^{(1)}_{1}$ does not satisfy the Li-Yau type inequality. Otherwise, the manifold would necessarily be a fiber bundle over $T^{b_1}$, which contradicts Theorem \ref{T1}.
\end{proof}

\section{Manifolds with Integral Curvature Bounds}
\subsection{$L^{\infty}$ Estimates for Eigenforms of Rough Laplacian}

\begin{lemma}\label{L4}(cf. \cite[Lemma 4.1]{PS})
	Let $(X,g)$ be a closed $n$-dimensional smooth Riemannian manifold satisfying the Sobolev inequality:
	\[
	\|f\|_{\frac{2n}{n-2}} \leq \|f\|_{2} + C_{s}\|df\|_{2}, \quad \text{for all } f \in L^{2}_{1}(X).
	\]
	Suppose  $\theta$ is a $p$-form such that 
	$$\nabla^{*}\nabla\theta = \lambda\theta,$$
	where $\la$ is a positive constant. Then
	\begin{equation}\label{E1}
	\|\theta\|_{\infty} \leq \exp\left\{C(n)\sqrt{\lambda}C_{s}\right\} \|\theta\|_{2}.
	\end{equation}
\end{lemma}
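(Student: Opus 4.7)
My plan is to reduce the statement directly to Theorem \ref{T5} by applying it to the nonnegative continuous function $u=|\theta|$. Since $\theta$ is a smooth eigensection of the elliptic operator $\nabla^{*}\nabla$ on a closed manifold, $u$ is Lipschitz and lies in $L^{2}_{1}(X)$. The only real content is to verify the distributional differential inequality
\[
u\,\Delta_{d} u \leq \lambda u^{2},
\]
after which the conclusion follows from Theorem \ref{T5} with $c=\lambda$.

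To establish this inequality, I would use the standard identity for the Hodge Laplacian of the pointwise squared norm of a section. A direct calculation in a local orthonormal frame yields
\[
\tfrac{1}{2}\Delta_{d}|\theta|^{2} \;=\; \langle \nabla^{*}\nabla\theta,\theta\rangle - |\nabla\theta|^{2} \;=\; \lambda|\theta|^{2} - |\nabla\theta|^{2},
\]
using the eigenvalue equation. On the open set $\{\theta\neq 0\}$, where $u=|\theta|$ is smooth, the product rule for $u^{2}$ also gives $\tfrac{1}{2}\Delta_{d}(u^{2}) = u\,\Delta_{d}u - |\nabla u|^{2}$. Comparing the two expressions and invoking Kato's inequality $|\nabla|\theta||\leq|\nabla\theta|$ produces the pointwise bound $u\,\Delta_{d}u\leq \lambda u^{2}$ off the zero set of $\theta$.

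To promote this to a global distributional statement (which is what Theorem \ref{T5} requires), I would regularize with $u_{\varepsilon}=\sqrt{|\theta|^{2}+\varepsilon}$, which is smooth and strictly positive. Since $u_{\varepsilon}^{2}$ and $|\theta|^{2}$ differ by a constant, the Bochner-type identity above is unchanged, and a direct Cauchy--Schwarz computation gives $|\nabla u_{\varepsilon}|^{2}\leq |\nabla\theta|^{2}$ everywhere; combining these yields $u_{\varepsilon}\Delta_{d}u_{\varepsilon}\leq \lambda u_{\varepsilon}^{2}$ in the classical pointwise sense. Pairing with a nonnegative test function and passing $\varepsilon\to 0$ by dominated convergence (using $u_{\varepsilon}\to u$ uniformly and $\nabla u_{\varepsilon}\to \nabla u$ a.e.) gives the desired distributional inequality on all of $X$. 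Applying Theorem \ref{T5} with $c=\lambda$ then yields precisely the bound \eqref{E1}.

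The main technical point I expect to require care is the handling of the zero set of $\theta$: the naive identity for $u$ alone holds only off this set, and the regularization must be carried out cleanly so that the limiting inequality is genuinely distributional on $X$. Apart from that, the argument is essentially a packaging of the Bochner/Weitzenb\"ock framework together with Kato's inequality, followed by a single invocation of the Moser-type bound of Theorem \ref{T5}.
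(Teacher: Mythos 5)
Your proposal is correct and follows essentially the same route as the paper: the Bochner-type identity for $|\theta|^{2}$, Kato's inequality, and a single application of Theorem \ref{T5} with $c=\lambda$. The only difference is that you carry out the regularization $u_{\varepsilon}=\sqrt{|\theta|^{2}+\varepsilon}$ to justify the distributional inequality across the zero set of $\theta$, a step the paper's proof leaves implicit.
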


\begin{proof}
	For any $\alpha \in \Omega^{p}(X)$, the following identity holds:
	\[
	-\frac{1}{2}\Delta_d|\alpha|^{2} = |\nabla\alpha|^{2} - \langle\nabla^{*}\nabla\alpha,\alpha\rangle.
	\]
	Applying this to $\theta$ and using Kato's inequality $|\nabla|\theta|| \leq |\nabla\theta|$, we obtain:
	\begin{align*}
	\frac{1}{2}\Delta_d|\theta|^{2} &= \langle\nabla^{*}\nabla\theta,\theta\rangle - |\nabla\theta|^{2} \\
	&\leq \lambda|\theta|^{2} - \left|\nabla|\theta|\right|^{2}.
	\end{align*}
	On the other hand, we have the pointwise identity:
	\[
	\frac{1}{2}\Delta_d|\theta|^{2} = |\theta| \cdot \Delta_d|\theta| + \left|\nabla|\theta|\right|^{2}.
	\]
	Combining these yields:
	\[
	|\theta| \cdot \Delta_d|\theta| \leq \lambda|\theta|^{2}.
	\]
	The estimate now follows directly from Theorem \ref{T5}.
\end{proof}

\begin{proposition}\label{P6}
	Let $n \geq 2$ and $2p > n$, and let $(X,g)$ be a closed $n$-manifold satisfying the Sobolev inequality:
	\[
	\|f\|_{\frac{2n}{n-2}} \leq \|f\|_{2} + C_{s}\|df\|_{2}, \quad \text{for all } f \in L^{2}_{1}(X).
	\]
	Suppose $\theta \in \Omega^{p}(X)$ is a $p$-form such that $$\nabla^{*}\nabla\theta = \lambda\theta$$
	where $\la$ is a positive constant. 
	
	Then there exists a constant $C(n,p) > 0$ such that
		\begin{equation}\label{E2}
		\begin{split}
	&D\|\nabla\theta\|_{\infty} \\
	&\leq\min\{C(n,p) (1 + \sqrt{t})^{\a} \sqrt{\lambda D^{2}} \|\theta\|_2,
	 C(n,p) (1 + \sqrt{t})^{\b} (\sqrt{\lambda D^{2}})^{\frac{\b}{\a} } \exp\{C(n,p)\sqrt{\la}C_{s} \} \|\theta\|_2\},\\
	 \end{split}
		\end{equation}
	where $\a=\frac{2pn}{2p-n}$ and $\b=\frac{2pn}{2p-n+pn}$,  $t = 4 C_s\sqrt{B}\sqrt{1+BD^{2}}$, $B=\lambda + \|\mathrm{Ric}^{-}\|_{p} + \|\mathrm{Riem}\|_{2p}$.
\end{proposition}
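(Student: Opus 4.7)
The plan is to derive a weak differential inequality for $|\nabla\theta|$ via a commutator--Bochner argument, and then run a Moser iteration driven by the Sobolev hypothesis, with the $L^{2p}$-integrability of the Riemann tensor (together with $2p>n$) making the zeroth-order potential subcritical.

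First, I would differentiate the eigenvalue equation $\nabla^*\nabla\theta = \lambda\theta$. Using the Ricci identity $[\nabla_i,\nabla_j] = R(e_i,e_j)$ to commute $\nabla^*\nabla$ past $\nabla$, one obtains a schematic identity of the form
\[
\nabla^*\nabla(\nabla\theta) = \lambda\,\nabla\theta + \mathrm{Ric}*\nabla\theta + \mathrm{Riem}*\nabla\theta + \nabla\mathrm{Riem}*\theta,
\]
in which the Ricci piece arises from the contracted commutator. Taking the inner product with $\nabla\theta$, combining with the Bochner identity $\tfrac12\Delta_d|\nabla\theta|^2 = \langle\nabla^*\nabla(\nabla\theta),\nabla\theta\rangle - |\nabla^2\theta|^2$, applying Kato's inequality $|\nabla|\nabla\theta|| \le |\nabla^2\theta|$, and integrating the $\nabla\mathrm{Riem}*\theta$ term by parts in the weak formulation (to avoid any derivative of the curvature tensor), I would obtain a distributional inequality of the form
\[
|\nabla\theta|\,\Delta_d|\nabla\theta| \le \bigl(\lambda + C|\mathrm{Ric}^{-}| + C|\mathrm{Riem}|\bigr)|\nabla\theta|^2 + C|\mathrm{Riem}|\,|\theta|\,|\nabla\theta|,
\]
which shows that the natural effective potential is $B = \lambda + \|\mathrm{Ric}^{-}\|_p + \|\mathrm{Riem}\|_{2p}$.

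Second, I would carry out a Moser--De Giorgi iteration on $u = |\nabla\theta|$. Testing the differential inequality against $u^{2q-2}$ and inserting $u^q$ into the Sobolev inequality, with H\"older control of the potential through the $L^p$-bound on $\mathrm{Ric}^-$ and the $L^{2p}$-bound on $\mathrm{Riem}$, yields a recursion of the form
\[
\|u\|_{q\cdot\frac{2n}{n-2}} \le C(n,p)(1+\sqrt{t})^{\sigma(q)}\bigl(\|u\|_{2q} + \text{coupling with }\theta\bigr),
\]
where $t = 4C_s\sqrt{B}\sqrt{1+BD^2}$ gathers the Sobolev and curvature data. Iterating over $q_k = (n/(n-2))^k q_0$ and summing the geometric series of exponents produces
\[
\|u\|_\infty \le C(n,p)(1+\sqrt{t})^{\alpha}\|u\|_{q_0},\qquad \alpha = \tfrac{2pn}{2p-n},
\]
precisely the effective Sobolev exponent dictated by the $L^{2p}$-potential.

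Third, the two alternatives in (\ref{E2}) arise from two ways of starting the iteration. Pairing the eigenvalue equation with $\theta$ gives the trivial bound $\|\nabla\theta\|_2^2 = \lambda\|\theta\|_2^2$; substituting this into the Moser estimate with $q_0 = 1$ and multiplying by $D$ yields the first alternative. For the second alternative, I would feed Lemma \ref{L4}'s exponential estimate $\|\theta\|_\infty \le \exp\{C(n,p)\sqrt\lambda\, C_s\}\|\theta\|_2$ into the $\mathrm{Riem}*\theta$ coupling term, handle that term in $L^\infty$, and start the iteration from a higher-$L^q$ intermediate norm; this shortens the iteration chain, replacing $\alpha$ with $\beta = \tfrac{2pn}{2p-n+pn}$ and producing the fractional power $(\sqrt{\lambda D^2})^{\beta/\alpha}$ alongside the exponential prefactor. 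The main obstacle will be the bookkeeping of the iteration: because two different integrability scales ($L^p$ and $L^{2p}$) appear and the amplification factor $(1+\sqrt{t})$ accumulates at every step, one must track the exponents precisely so that they sum to the stated $\alpha$ and $\beta$; a secondary technical point is rigorously justifying the integration by parts that eliminates $\nabla\mathrm{Riem}$, which is not directly bounded by the hypotheses.
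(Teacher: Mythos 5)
Your overall strategy matches the paper's: a commutation/refined-Kato inequality for $|\nabla\theta|$ (the paper's Lemma \ref{L1}), integration by parts to move the derivative off the curvature in the $\nabla^{*}(R\theta)$ term (exactly as the paper does, at the cost of absorbing a $|\nabla u|^{2}u^{2(k-1)}$ term and producing the $|R\theta|^{2}u^{2(k-1)}$ coupling), and a Moser iteration driven by the hypothesized Sobolev inequality, with $\mathrm{Ric}^{-}$ controlled in $L^{p}$ and $\mathrm{Riem}$ in $L^{2p}$. Your first alternative, obtained by closing the iteration with $\|\nabla\theta\|_{2}^{2}=\lambda\|\theta\|_{2}^{2}$, is derived the same way the paper derives it.

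The genuine gap is in your mechanism for the second alternative. You propose to get the exponent $\beta=\frac{2pn}{2p-n+pn}$ by ``starting the iteration from a higher $L^{q}$ norm,'' which ``shortens the iteration chain.'' That does not work: truncating the iteration at level $l_{0}$ replaces the accumulated exponent $\alpha$ by $\alpha/\gamma^{l_{0}}$ (a power of the iteration ratio $\gamma$), not by $\beta=\frac{2\alpha}{2+\alpha}$, and it leaves you needing an a priori bound on $\|\,|\nabla\theta|\,\|_{q_{0}}$ for large $q_{0}$ that you do not have; nor does it produce the fractional power $(\sqrt{\lambda D^{2}})^{\beta/\alpha}$. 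What actually happens in the paper is different: the coupling term keeps the ratio $B\|\theta\|_{\infty}^{2}/\|\nabla\theta\|_{\infty}^{2}$ \emph{inside} the infinite product, so the full iteration yields a single self-referential inequality
\[
\|u\|_{\infty}^{2}\;\leq\;\prod_{l}\Bigl(1+4C_{s}\gamma^{l+1}\sqrt{B}\sqrt{1+\tfrac{B\|\theta\|_{\infty}^{2}}{\|u\|_{\infty}^{2}}}\Bigr)^{\frac{2n}{(n-2)\gamma^{l+1}}}\|u\|_{2}^{2},\qquad u=|\nabla\theta|,
\]
which is then resolved by the dichotomy $D^{2}\|u\|_{\infty}^{2}\gtrless\|\theta\|_{\infty}^{2}$. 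In the second case one multiplies both sides by $\|u\|_{\infty}^{\alpha}$, divides by $\|\theta\|_{\infty}^{\alpha}$ using $\|u\|_{\infty}/\|\theta\|_{\infty}\leq 1/D$, converts $\|\theta\|_{\infty}$ to $\|\theta\|_{2}$ by Lemma \ref{L4} (this is where the $\exp\{C\sqrt{\lambda}C_{s}\}$ enters, as you guessed), and takes the $(2+\alpha)$-th root; the identities $\frac{2\alpha}{2+\alpha}=\beta$ and $\frac{2}{2+\alpha}=\frac{\beta}{\alpha}$ are what produce the stated exponents. Without this (or an equivalent) resolution of the self-referential inequality, your plan establishes only the first alternative, and in particular cannot conclude anything in the regime where $\|\nabla\theta\|_{\infty}$ is small compared with $\|\theta\|_{\infty}/D$, which is precisely the regime the second bound is designed for.
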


To prove Proposition \ref{P6}, we require a commutation lemma (see \cite{Aub,ACGR,LR}):

\begin{lemma}\label{L1}
	For any smooth section $S \in \Gamma(E)$, we have
	\begin{equation}
	\frac{1}{2}\Delta_d(|\na S|) + |\na^{2}S| \leq \langle \na^{\ast}R^{E}S, \na S\rangle + {\mathrm{Ric}}^{-}|\na S|^{2} + \langle \na\bar{\Delta}S, \na S\rangle + |R^{E}| \cdot |\na S|^{2},
	\end{equation}
	where for vector fields $U,V$ on $X$, 
	$$R^{E}_{U,V}=\na^{2}_{U,V}-\na^{2}_{V,U}$$
	is the curvature of $E$, and 
	$$\langle \na^{\ast}R^{E}S,\na S\rangle:=-\langle\na_{j}(R^{E}_{ij}S),\na_{i}S\rangle.$$

\end{lemma}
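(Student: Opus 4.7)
The plan is to prove Lemma \ref{L1} by the standard Weitzenb\"{o}ck commutation calculus on the bundle $T^{\ast}X\otimes E$, equipped with the tensor product of the Levi-Civita connection on $T^{\ast}X$ and the given metric connection $\na$ on $E$. The stated inequality is what falls out when one expands $\bar{\De}(\na S)$ and regroups the curvature contributions, so the whole proof is pointwise and can be carried out in any local orthonormal frame that is synchronous at the point under consideration.

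First, I would start from the universal pointwise identity
$$\tfrac{1}{2}\De_{d}|\sigma|^{2}=\langle\sigma,\bar{\De}\sigma\rangle-|\na\sigma|^{2},$$
valid for any section $\sigma$ of a Riemannian vector bundle with compatible connection (here $\bar{\De}=\na^{\ast}\na$ on that bundle). Applied with $\sigma=\na S\in\Ga(T^{\ast}X\otimes E)$ it yields the exact identity
$$\tfrac{1}{2}\De_{d}|\na S|^{2}+|\na^{2}S|^{2}=\langle\na S,\bar{\De}(\na S)\rangle,$$
so the inequality in the statement has to come entirely from bounding the right-hand side.

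Second, I would compute $\bar{\De}(\na S)$ by commuting covariant derivatives. In a synchronous frame $(\bar{\De}\na S)_{i}=-\sum_{a}\na_{a}\na_{a}\na_{i}S$, and two successive applications of the commutation relation $[\na_{a},\na_{b}]$ are needed: the first, applied to $S\in\Ga(E)$, produces an $R^{E}$-term that, after the outer derivative is absorbed, becomes the divergence term $\na^{\ast}(R^{E}S)$; the second, applied to the $T^{\ast}X\otimes E$-valued tensor $\na_{\cdot}S$, produces a Riemannian piece acting on the $T^{\ast}X$-slot and an $R^{E}$ piece acting on the $E$-slot. Tracing the Riemannian piece over the summation index converts it into a Ricci factor, giving the Weitzenb\"{o}ck-type formula
$$\bar{\De}(\na S)=\na(\bar{\De}S)+\mathrm{Ric}\cdot\na S+R^{E}\cdot\na S+\na^{\ast}(R^{E}S),$$
up to signs that have to be tracked carefully to match the statement.

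Finally, I would pair this identity with $\na S$ and use $\langle\mathrm{Ric}(\na S),\na S\rangle\geq-\mathrm{Ric}^{-}|\na S|^{2}$ together with the pointwise bound $|\langle R^{E}\cdot\na S,\na S\rangle|\leq|R^{E}||\na S|^{2}$; the divergence contribution $\langle\na^{\ast}(R^{E}S),\na S\rangle$ is left intact because it is precisely the term that will absorb, after integration by parts, into the Moser iteration used in Proposition \ref{P6}. The only genuine obstacle is bookkeeping: making sure that the two rounds of commutation produce the Ricci term with the sign that leaves $\mathrm{Ric}^{-}$ (rather than $\mathrm{Ric}^{+}$) on the right-hand side, and that the divergence term appears exactly as $\na^{\ast}(R^{E}S)$ with the sign written in the statement. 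No deeper ideas are required beyond a standard commutation calculation in a synchronous frame.
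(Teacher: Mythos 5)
Your proposal is correct and follows exactly the standard commutation/Weitzenb\"ock calculation on $T^{\ast}X\otimes E$ that the paper's cited sources (Aubry, ACGR, Le Couturier--Robert) use; the paper itself states Lemma \ref{L1} without proof, so your sketch is precisely the intended argument. Note only that the left-hand side of the statement should read $\tfrac{1}{2}\Delta_d(|\na S|^{2})+|\na^{2}S|^{2}$ (a typo in the paper), which is exactly what your first identity with $\sigma=\na S$ produces and what is actually used in the proof of Proposition \ref{P6}.
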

In our case, the fiber bundle $E$ is the cotangent bundle and $S\in \Omega^1(X)$ is a differential $1$-form,  so the bundle curvature  $R^E$  is identified with the Riemann curvature tensor $\mathrm{Riem}$.
We now present the proof of Proposition \ref{P6},  whose main technical idea originates from Aubry \cite[Proposition 4.2]{Aub}.
\begin{proof}[\textbf{Proof of Proposition \ref{P6}}]
For simplicity, we denote the Riemann curvature tensor $\mathrm{Riem}$ as $R$ throughout the proof. Let $u = |\nabla\theta|$.  We have the identity
	\[
	u(\Delta_d u) = \frac{1}{2}\Delta_d(u^2) + |\nabla u|^2.
	\]
	Hence, by Lemma \ref{L1}, we can get for any $k\geq1$,
	\begin{align*}
	\int_X |d(u^k)|^2 &= k^2 \int_X |\nabla u|^2 u^{2(k-1)} \\
	&= \frac{k^2}{2k-1} \int_X \langle u^{2k-1}, \Delta_d u \rangle \\
	&\leq \frac{k^2}{2k-1} \int_X \left( \frac{1}{2}\Delta_d(u^2) + |\nabla u|^2 \right) u^{2(k-1)} \\
	&\leq \frac{k^2}{2k-1} \left( \int_X \langle {\mathrm{Ric}}^{-} u^{2k} + \int_X \langle \nabla\bar{\Delta}\theta, \nabla\theta \rangle u^{2(k-1)} \right. \\
	&\quad + \left. \int_X \langle \nabla^* R\theta, \nabla\theta \rangle u^{2(k-1)} + \int_X |R| u^{2k} \right) \\
	&\leq \frac{k^2}{2k-1} \left( \int_X \langle \mathrm{Ric}^{-} u^{2k} + \int_X (\lambda + |R|) u^{2k}+\int_X \langle \nabla^* R\theta, \nabla\theta \rangle u^{2(k-1)} \right).
	\end{align*}
We now estimeate the last term $\int_X \langle \nabla^* R\theta, \nabla\theta \rangle u^{2(k-1)} $.  Using the relation 
$$|ab|\leq |a|^2 + \frac{|b|^2}{4}\leq |a|^2 +\frac{|b|^2}{2}$$ and the divergence theorem,  we observe that
	\begin{align*}
	\int_X \langle \nabla^* R\theta, \nabla\theta \rangle u^{2(k-1)} 
	&= \int_X \langle \nabla^* R\theta, u^{2(k-1)} \nabla\theta \rangle \\
	&= 2(k-1) \int_X \langle R\theta, (u^{2k-3} \nabla u) \nabla\theta \rangle + \int_X \langle R\theta, (\nabla^2 \theta) u^{2(k-1)} \rangle \\
	&\leq \int_X |R\theta|^2 \cdot u^{2(k-1)} + 2(k-1) \int_X |R\theta| \cdot |\nabla u| \cdot u^{2(k-1)} \\
	&\leq \int_X |R\theta|^2 \cdot u^{2(k-1)} + 2(k-1) \left( \int_X |R\theta|^2 \cdot u^{2(k-1)} + \frac{1}{2} \int_X |\nabla u|^2 u^{2(k-1)} \right) \\
	&\leq (2k-1) \int_X |R\theta|^2 \cdot u^{2(k-1)} + (k-1) \int_X |\nabla u|^2 u^{2(k-1)}.
	\end{align*}
From this estimate, it follows that
	\begin{align*}
	\int_X |d(u^k)|^2 &\leq \frac{k^2}{2k-1} \left( \int_X (\lambda + \mathrm{Ric}^{-} ) u^{2k} + \int_X |R| u^{2k} \right. \\
	&\quad + \left. (2k-1) \int_X |R\theta|^2 \cdot u^{2(k-1)} + \frac{k-1}{k^2} \int_X |d(u^k)|^2 \right).
	\end{align*}	
After rearranging terms, we use  H\"older’s inequality to obtain
	\begin{align*}
	\int_X |d(u^k)|^2 &\leq k \left( \int_X (\lambda + \mathrm{Ric}^{-}+|R| ) u^{2k} + (2k-1)k \int_X |R\theta|^2 \cdot u^{2(k-1)} \right) \\
	&\leq  \left( k(\lambda + \|\mathrm{Ric}^{-} \|_p + \|R\|_p) \|u\|^{2k}_{\frac{2kp}{p-1}} + 4k^{2}\|R\|^2_{2p} \|\theta\|^2_\infty \|u\|^{2(k-1)}_{\frac{2(k-1)p}{p-1}} \right) \\
	&\leq 4k^{2}B\|u\|^{2k}_{\frac{2kp}{p-1}} + 4k^{2}B^{2}\|\theta\|^2_\infty \|u\|^{2(k-1)}_{\frac{2(k-1)p}{p-1}},
	\end{align*}
	where we define
	\begin{align*}
	B_1 &= \lambda + \|\mathrm{Ric}^{-}\|_p + \|R\|_{p} \leq \lambda + \|\mathrm{Ric}^{-}\|_{p}  + \|R\|_{2p} := B, \\
	B_2 &= \|R\|^2_{2p} \leq B^2.
	\end{align*}
Combing the Sobolev inequality in Theorem \ref{T5}  and the estimate for $\int_X |d(u^k)|^2$, we conclude that 
	\begin{align*}
	\|u\|^k_{\frac{2nk}{n-2}} &= \|u^k\|_{\frac{2n}{n-2}} \leq \|u^k\|_2 + C_s \|d(u^k)\|_2 \\
	&\leq \|u^k\|_2 + 2C_s k  \sqrt{ B\|u\|^{2k}_{\frac{2kp}{p-1}} +B^{2} \|\theta\|^2_\infty \|u\|^{2(k-1)}_{\frac{2(k-1)p}{p-1}} } \\
	&\leq \left( \|u\|_\infty^{\frac{1}{k}} \|u\|^{1-\frac{1}{k}}_{\frac{2(k-1)p}{p-1}} \right)^k \\
	&\quad + 2C_s k \sqrt{ B\left( \|u\|_\infty^{\frac{1}{k}} \|u\|^{1-\frac{1}{k}}_{\frac{2(k-1)p}{p-1}} \right)^{2k} + B^{2}\|\theta\|^2_\infty \|u\|^{2(k-1)}_{\frac{2(k-1)p}{p-1}} } \\
	&= \left( \|u\|_\infty + 2C_s k \sqrt{ B\|u\|^2_\infty +B^{2} \|\theta\|^2_\infty } \right) \|u\|^{k-1}_{\frac{2(k-1)p}{p-1}},
	\end{align*}
	where we used the interpolation inequality
	\[
	\|u\|_{2k} \leq \|u\|_{\frac{2kp}{p-1}} \leq \|u\|^{\frac{1}{k}}_\infty \|u\|^{1-\frac{1}{k}}_{\frac{2(k-1)p}{p-1}}.
	\]
		Therefore,
	\[
	\left( \frac{\|u\|_{\frac{2nk}{n-2}}}{\|u\|_\infty} \right)^k
	\leq \left( 1 + 2C_s k\sqrt{B}\sqrt{ 1 + \frac{B \|\theta\|^2_\infty}{\|u\|^2_\infty} } \right)
	\left( \frac{\|u\|_{\frac{2(k-1)p}{p-1}}}{\|u\|_\infty} \right)^{k-1}.
	\]	
Taking both sides to the power of $\frac{2n}{n-2}$ yields
	\begin{align*}
	\left( \frac{\|u\|_{\frac{2nk}{n-2}}}{\|u\|_\infty} \right)^{\frac{2nk}{n-2}}
	&\leq \left( 1 + 2C_s k\sqrt{B} \sqrt{ 1 + \frac{B \|\theta\|^2_\infty}{\|u\|^2_\infty} } \right)^{\frac{2n}{n-2}}
	\left( \frac{\|u\|_{\frac{2(k-1)p}{p-1}}}{\|u\|_\infty} \right)^{\frac{2n(k-1)}{n-2}} \\
	&= \left( 1 +2C_s k\sqrt{B}  \sqrt{ 1 + \frac{B \|\theta\|^2_\infty}{\|u\|^2_\infty} } \right)^{\frac{2n}{n-2}}
	\left( \frac{\|u\|_{\frac{2(k-1)p}{p-1}}}{\|u\|_\infty} \right)^{\gamma \frac{2(k-1)p}{p-1}},
	\end{align*}
	where $\gamma = \frac{n(p-1)}{p(n-2)} > 1$.
	
	Define the sequence $\{a_i\}_{i\geq 0}$ by $a_0 = \frac{2p}{p-1}$ and
	\[
	a_{i+1} = \gamma a_i + \frac{2n}{n-2}.
	\]
	Then
	\[
	a_i = \gamma^i (a_0 + \gamma_0) - \gamma_0,
	\]
	where $\gamma_0 = \frac{2n}{(n-2)(\gamma-1)} = \frac{2np}{2p-n}$. Let
	\[
	k_i = \frac{p-1}{2p} a_i + 1 = \gamma^{i+1} + \gamma^i - \gamma + 1 < 2\gamma^{i+1},
	\]
	which satisfies
	\[
	\frac{2n}{n-2} k_i = \frac{n(p-1)}{p(n-2)} a_i + \frac{2n}{n-2} = a_{i+1}.
	\]
	Therefore, {substituting $k=k_i$ gives}
	\begin{align*}
	\left( \frac{\|u\|_{a_{i+1}}}{\|u\|_\infty} \right)^{\frac{a_{i+1}}{\gamma^{i+1}}}
	&\leq \left( 1 + 2C_s k_{i}\sqrt{B} \sqrt{ 1 + \frac{B \|\theta\|^2_\infty}{\|u\|^2_\infty} } \right)^{\frac{2n}{(n-2)\gamma^{i+1}}}
	\left( \frac{\|u\|_{a_i}}{\|u\|_\infty} \right)^{\frac{a_i}{\gamma^i}} \\
	&\leq \prod_{l=0}^i \left( 1 + 2C_s  k_l\sqrt{B}\sqrt{ 1 + \frac{B \|\theta\|^2_\infty}{\|u\|^2_\infty} } \right)^{\frac{2n}{(n-2)\gamma^{l+1}}}
	\left( \frac{\|u\|_{a_0}}{\|u\|_\infty} \right)^{a_0}.
	\end{align*}
	To estimate the infinite product, we need the following lemma.
	\begin{lemma}\label{L6}
		Let $t > 0$, $\gamma > 1$. Then
		\[
		P = \prod_{i=0}^\infty (1 + t\gamma^{i+1})^{\frac{1}{\gamma^{i+1}}}
		\leq \exp\left\{ \frac{2\sqrt{\gamma}}{\gamma-1} \right\} (1 + \sqrt{t})^{\frac{2}{\gamma-1}}.
		\]
	\end{lemma}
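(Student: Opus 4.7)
The plan is to take logarithms and split the $t$- and $\gamma$-dependence. After substituting $j = i+1$, I need to bound
\[
\log P = \sum_{j=1}^\infty \gamma^{-j}\log(1+t\gamma^j).
\]
The first move is to apply the elementary inequality $(1+\sqrt{x})^2 \geq 1 + x$, giving $\log(1+t\gamma^j) \leq 2\log(1+\sqrt{t}\,\gamma^{j/2})$, and then the sub-multiplicativity $(1+a)(1+b) \geq 1 + ab$ valid for $a,b \geq 0$, which decouples the two variables:
\[
\log(1+\sqrt{t}\,\gamma^{j/2}) \leq \log(1+\sqrt{t}) + \log(1+\gamma^{j/2}).
\]

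Summing the $\log(1+\sqrt{t})$ contribution against the geometric series $\sum_{j\geq 1}\gamma^{-j} = 1/(\gamma-1)$ will at once produce the factor $(1+\sqrt{t})^{2/(\gamma-1)}$ in the claimed bound. The remaining $\gamma$-only term $S_\gamma := 2\sum_{j=1}^\infty \gamma^{-j}\log(1+\gamma^{j/2})$ is a constant depending only on $\gamma$, and the task reduces to verifying $S_\gamma \leq 2\sqrt{\gamma}/(\gamma-1)$, which upon exponentiation yields the prefactor $\exp\!\left(2\sqrt{\gamma}/(\gamma-1)\right)$.

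For the $\gamma$-part I would first apply $\log(1+y) \leq y$ with $y = \gamma^{j/2}$, giving the crude bound $S_\gamma \leq 2\sum_{j\geq 1}\gamma^{-j/2} = 2/(\sqrt{\gamma}-1) = 2(\sqrt{\gamma}+1)/(\gamma-1)$. This is slightly weaker than the stated constant, so to recover the precise $2\sqrt{\gamma}/(\gamma-1)$ I would split the sum at the index where $\gamma^{j/2}$ crosses $1$, apply $\log(1+y) \leq y$ in the small-$y$ regime and the sharper $\log(1+y) \leq \log 2 + \tfrac{1}{2}\log y$ (or equivalently rewrite $\log(1+\gamma^{j/2}) = \tfrac{j}{2}\log\gamma + \log(1+\gamma^{-j/2})$) in the large-$y$ regime, and recombine. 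Pinning down this clean constant is the main technical obstacle: the geometric decoupling is straightforward, but the exact numerical coefficient requires a slightly careful two-regime estimate rather than a one-line bound. Once it is secured, exponentiating the resulting estimate for $\log P$ yields the inequality of the lemma.
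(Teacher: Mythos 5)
Your overall strategy --- take logarithms, use $\ln(1+tx)\le 2\ln(1+\sqrt{tx})$, decouple the $t$- and $\gamma$-dependence, and sum geometric series --- is the same as the paper's, and your $t$-part is fine. The gap is exactly where you flag it, and the repair you sketch does not close it. After the decoupling $(1+a)(1+b)\ge 1+ab$ you must show $S_\gamma=2\sum_{j\ge1}\gamma^{-j}\ln(1+\gamma^{j/2})\le \tfrac{2\sqrt{\gamma}}{\gamma-1}$. Your two-regime split is vacuous: since $\gamma>1$ and $j\ge1$ one always has $\gamma^{j/2}>1$, so there is no small-$y$ regime; moreover the ``sharper'' inequality $\ln(1+y)\le\ln 2+\tfrac12\ln y$ is equivalent to $(1-\sqrt{y})^{2}\le 0$ and is therefore false for $y\neq 1$. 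The legitimate rewrite $\ln(1+\gamma^{j/2})=\tfrac{j}{2}\ln\gamma+\ln(1+\gamma^{-j/2})$ gives, after summing and using $\ln(1+u)\le u$, the bound $\tfrac{\gamma\ln\gamma}{(\gamma-1)^2}+\tfrac{2}{\gamma^{3/2}-1}$, which behaves like $\tfrac{7}{3(\gamma-1)}$ as $\gamma\to1^{+}$ and so exceeds the target $\tfrac{2\sqrt{\gamma}}{\gamma-1}\sim\tfrac{2}{\gamma-1}$. The inequality for $S_\gamma$ appears to be true but is delicate near $\gamma=1$, and none of the elementary bounds you propose establish it.

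The paper sidesteps this entirely with a sharper decoupling: for $x\ge 1$ one has the identity $1+\sqrt{tx}=(1+\sqrt{t})\bigl(1+(\sqrt{x}-1)\tfrac{\sqrt{t}}{1+\sqrt{t}}\bigr)$, and applying $\ln(1+u)\le u$ to the second factor gives $\ln(1+tx)\le 2\ln(1+\sqrt{t})+2(\sqrt{x}-1)$. With $x=\gamma^{i+1}$ the $\gamma$-part of the sum becomes $2\sum_{i\ge0}\gamma^{-(i+1)}\bigl(\gamma^{(i+1)/2}-1\bigr)=\tfrac{2}{\sqrt{\gamma}-1}-\tfrac{2}{\gamma-1}=\tfrac{2\sqrt{\gamma}}{\gamma-1}$ exactly; the subtracted $-1$, which your decoupling discards, is precisely what converts your $\tfrac{2(\sqrt{\gamma}+1)}{\gamma-1}$ into $\tfrac{2\sqrt{\gamma}}{\gamma-1}$. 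For the application in Proposition \ref{P6} your cruder constant $\exp\{2(\sqrt{\gamma}+1)/(\gamma-1)\}$ would be absorbed into $C(n,p)$ anyway, but as a proof of the lemma as stated your argument is incomplete.
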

	
	\begin{proof}
		For any $x > 0$, we have
		\begin{align*}
		\ln(1 + tx) &\leq 2\ln(1 + \sqrt{tx}) \\
		&= 2\ln(1 + \sqrt{t}) + 2\ln\left( 1 + (\sqrt{x} - 1)\frac{\sqrt{t}}{1 + \sqrt{t}} \right) \\
		&\leq 2\ln(1 + \sqrt{t}) + 2(\sqrt{x} - 1).
		\end{align*}
		Hence,
		\begin{align*}
		\ln P &= \sum_{i=0}^\infty \frac{\ln(1 + t\gamma^{i+1})}{\gamma^{i+1}} \\
		&\leq \sum_{i=0}^\infty \frac{2\ln(1 + \sqrt{t})}{\gamma^{i+1}} + \sum_{i=0}^\infty \frac{2\gamma^{\frac{i+1}{2}} - 2}{\gamma^{i+1}} \\
		&\leq \frac{2\ln(1 + \sqrt{t})}{\gamma-1} + \frac{2}{\sqrt{\gamma} - 1} - \frac{2}{\gamma-1} \\
		&= \frac{2\ln(1 + \sqrt{t})}{\gamma-1} + \frac{2\sqrt{\gamma}}{\gamma-1}.
		\end{align*}
		Therefore, we complete the proof of  this lemma.
	\end{proof}
	{Now we continue to prove Proposition \ref{P6}.} Taking the limit as $i \to \infty$, we obtain
	\begin{align*}
	1 &= \lim_{i\to\infty} \left( \frac{\|u\|_{a_{i+1}}}{\|u\|_\infty} \right)^{\frac{a_{i+1}}{\gamma^{i+1}}} \\
	&\leq \prod_{l=0}^\infty \left( 1 + 2C_s  k_l\sqrt{B} \sqrt{ 1 + \frac{B \|\theta\|^2_\infty}{\|u\|^2_\infty} } \right)^{\frac{2n}{(n-2)\gamma^{l+1}}}
	\left( \frac{\|u\|_{a_0}}{\|u\|_\infty} \right)^{a_0} \\
	&\leq \prod_{l=0}^\infty \left( 1 + 2C_s k_l\sqrt{B} \sqrt{ 1 + \frac{B \|\theta\|^2_\infty}{\|u\|^2_\infty} } \right)^{\frac{2n}{(n-2)\gamma^{l+1}}}
	\left( \frac{\|u\|^{1-\frac{1}{p}}_2 \|u\|^{\frac{1}{p}}_\infty}{\|u\|_\infty} \right)^{a_0} \\
	&= \prod_{l=0}^\infty \left( 1 + 2C_s k_l\sqrt{B} \sqrt{ 1 + \frac{ B\|\theta\|^2_\infty}{\|u\|^2_\infty} } \right)^{\frac{2n}{(n-2)\gamma^{l+1}}}
	\left( \frac{\|u\|^2_2}{\|u\|^2_\infty} \right) \\
	&\leq \prod_{l=0}^\infty \left( 1 + 4C_s \gamma^{l+1}\sqrt{B} \sqrt{ 1 + \frac{B \|\theta\|^2_\infty}{\|u\|^2_\infty} } \right)^{\frac{2n}{(n-2)\gamma^{l+1}}}
	\left( \frac{\|u\|^2_2}{\|u\|^2_\infty} \right),
	\end{align*}
	where we used the interpolation
	\[
	\|u\|_{a_0} \leq \|u\|^{1-\frac{1}{p}}_2 \|u\|^{\frac{1}{p}}_\infty.
	\]
Therefore,
	\[
	\|u\|^2_\infty \leq \prod_{l=0}^\infty \left( 1 + 4C_s\gamma^{l+1}\sqrt{B} \sqrt{ 1 + \frac{B \|\theta\|^2_\infty}{\|u\|^2_\infty} } \right)^{\frac{2n}{(n-2)\gamma^{l+1}}} \|u\|^2_2.
	\]
	We now consider two cases:\\
	\textbf{Case 1:} If $D^{2}\|u\|^{2}_\infty \geq \|\theta\|^{2}_\infty$, then
	\begin{align*}
	\|u\|^2_\infty &\leq \prod_{l=0}^\infty \left( 1 + 4 C_s\sqrt{B}\sqrt{1+BD^{2}} \gamma^{l+1} \right)^{\frac{2n}{(n-2)\gamma^{l+1}}} \|u\|^2_2.
	\end{align*}
	Let $t = 4 C_s\sqrt{B}\sqrt{1+BD^{2}}$. By Lemma \ref{L6}, we obtain
	\begin{align*}
	\|u\|^2_\infty &\leq C(n,p) (1 + \sqrt{t})^{\frac{4pn}{2p-n}} \|u\|^2_2 \\
	&= C(n,p) (1 + \sqrt{t})^{\frac{4pn}{2p-n}} \lambda \|\theta\|^2_2.
	\end{align*}
\textbf{Case 2:} If $D^{2}\|u\|^{2}_\infty \leq \|\theta\|^{2}_\infty$, then
	\begin{align*}
	\|u\|^2_\infty \|u\|_\infty^{\frac{2pn}{2p-n}} 
	&\leq \prod_{l=0}^\infty \left( \|u\|_\infty + 4C_s \sqrt{B} \gamma^{l+1} \sqrt{ \|u\|^2_\infty + B\|\theta\|^2_\infty } \right)^{\frac{2n}{(n-2)\gamma^{l+1}}} \|u\|^2_2 \\
	&\leq \prod_{l=0}^\infty \left( \frac{\|u\|_\infty}{\|\theta\|_\infty} + 4C_s \sqrt{B} \gamma^{l+1} \sqrt{ \frac{\|u\|^2_\infty}{\|\theta\|^2_\infty} + B } \right)^{\frac{2n}{(n-2)\gamma^{l+1}}} \|u\|^2_2 \|\theta\|^{\frac{2pn}{2p-n}}_\infty \\
	&{\leq \prod_{l=0}^\infty \left(\frac{1}{D}+ 4 C_s \sqrt{B}\sqrt{\frac{1}{D^2}+B} \gamma^{l+1} \right)^{\frac{2n}{(n-2)\gamma^{l+1}}} \|u\|^2_2 \|\theta\|^{\frac{2pn}{2p-n}}_\infty} \\
	&\leq C(n,p) (1+\sqrt{t})^{\frac{4pn}{2p-n}}D^{-\frac{2pn}{2p-n}} \|u\|^2_2 \|\theta\|^{\frac{2pn}{2p-n}}_\infty \\
	&= C(n,p) (1 + \sqrt{t})^{\frac{4pn}{2p-n}} D^{-\frac{2pn}{2p-n}}\lambda \|\theta\|^2_2 \|\theta\|^{\frac{2pn}{2p-n}}_\infty\\
	&\leq C(n,p) (1 + \sqrt{t})^{\frac{4pn}{2p-n}} D^{-\frac{2pn}{2p-n}}\lambda \exp\{C(n)\frac{2pn}{2p-n}\sqrt{\la}C_{s} \} \|\theta\|^{2+\frac{2pn}{2p-n} }_2.
	\end{align*}
Combining the above two cases, we arrive at the inequality estimate (\ref{E2}).
\end{proof}

\subsection{A Non-Vanishing Criterion for Small Eigenvalues}

We begin by recalling an inequality for smooth functions on closed manifolds.
We denote by
$$\varepsilon=\min\{C(n,p) (1 + \sqrt{t})^{\a} \sqrt{\lambda D^{2}},
C(n,p) (1 + \sqrt{t})^{\b} (\sqrt{\lambda D^{2}})^{\frac{\b}{\a} } \exp\{C(n,p)\sqrt{\la}C_{s} \}\}.$$
	where $\a=\frac{2pn}{2p-n}$ and $\b=\frac{2pn}{2p-n+pn}$,  $t = 4 C_s\sqrt{B}\sqrt{1+BD^{2}}$, $B=\lambda + \|\mathrm{Ric}^{-}\|_{p} + \|\mathrm{Riem}\|_{2p}$.
\begin{theorem}\label{T6}(cf.\cite{Heb})
	Let $(X,g)$ be a connected, closed Riemannian manifold with diameter $D(g)$, and let $f:X\rightarrow\mathbb{R}$ be a smooth function. Then for any $p,q\in X$,
	\begin{equation}
	|f(p)-f(q)| \leq \|\nabla f\|_{\infty} \cdot D(g).
	\end{equation}
\end{theorem}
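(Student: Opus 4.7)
The statement is a standard mean-value-type bound, so my plan is to reduce it to a one-dimensional calculus inequality along a length-minimizing geodesic.

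First I would invoke the Hopf-Rinow theorem: since $(X,g)$ is closed (hence complete) and connected, any two points $p,q\in X$ can be joined by a minimizing geodesic $\gamma\colon[0,\ell]\to X$ parametrized by arc length, with $\gamma(0)=p$, $\gamma(\ell)=q$, and $\ell=d(p,q)\leq D(g)$. This is the step that uses the global hypotheses on $X$; everything else is pointwise differential calculus.

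Next I would consider the composition $h(t)=f(\gamma(t))$, which is smooth on $[0,\ell]$. The chain rule gives $h'(t)=\langle \nabla f(\gamma(t)),\gamma'(t)\rangle$. By the fundamental theorem of calculus and the Cauchy-Schwarz inequality, together with $|\gamma'(t)|_g=1$,
\[
|f(q)-f(p)|=\left|\int_0^\ell h'(t)\,dt\right|\leq \int_0^\ell |\nabla f(\gamma(t))|\,|\gamma'(t)|\,dt\leq \int_0^\ell |\nabla f(\gamma(t))|\,dt.
\]
Finally I would bound the integrand pointwise by $\|\nabla f\|_\infty$ and use $\ell\leq D(g)$ to conclude
\[
|f(q)-f(p)|\leq \|\nabla f\|_\infty\cdot\ell\leq \|\nabla f\|_\infty\cdot D(g).
\]

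There is no real obstacle here; the only subtlety to flag is that completeness (from compactness) is essential for the existence of the minimizing geodesic, so a one-line justification of Hopf-Rinow should accompany the proof. The argument does not require any curvature or orientability assumption and works for any smooth $f$.
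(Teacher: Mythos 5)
Your proof is correct and is exactly the standard argument that the paper implicitly relies on by citing Hebey without proof: join $p$ and $q$ by a minimizing geodesic (which exists by compactness and Hopf--Rinow), then apply the fundamental theorem of calculus and Cauchy--Schwarz along it, and bound the length by the diameter. Nothing is missing, and your remark that completeness is the only global input is the right point to flag.
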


Using this, we can show that when the eigenvalue is sufficiently small, the corresponding eigen $p$-form is nowhere vanishing.

\begin{theorem}\label{T2}
	Let $(X,g)$ be a closed $n$-dimensional smooth Riemannian manifold satisfying the Sobolev inequality:
	\[
	\|f\|_{\frac{2n}{n-2}} \leq \|f\|_{2} + C_{s}\|df\|_{2}, \quad \text{for all } f \in L^{2}_{1}(X).
	\]
	Suppose $\theta$ is a $p$-form such that
	$$\nabla^{\ast}\nabla\theta = \lambda\theta,$$
	where  $\la$ is a positive constant. Then
	\begin{equation*}
	\frac{\inf|\theta|^{2}}{\sup|\theta|^{2}} \geq 1 -2\varepsilon
	\end{equation*}
\end{theorem}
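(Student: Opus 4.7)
The plan is to combine Kato's inequality, the elementary diameter bound in Theorem \ref{T6}, and the $L^\infty$ gradient estimate from Proposition \ref{P6}, so that the oscillation of $|\theta|$ becomes controlled by $\varepsilon$ times $\sup|\theta|$. First I would apply Kato's inequality $|\nabla|\theta||\leq|\nabla\theta|$; this makes $|\theta|:X\to\mathbb{R}$ a Lipschitz function with Lipschitz constant at most $\|\nabla\theta\|_{\infty}$. Using Theorem \ref{T6} (applied to $|\theta|$, which is smooth away from its zero set and Lipschitz globally, so the inequality passes to the continuous extension), I obtain
\[
\bigl||\theta(p)|-|\theta(q)|\bigr|\leq D(g)\,\|\nabla\theta\|_{\infty}\qquad\text{for all }p,q\in X.
\]

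Next I would choose $p$ realizing $\sup|\theta|$ and $q$ realizing $\inf|\theta|$, giving
\[
\sup|\theta|-\inf|\theta|\leq D\,\|\nabla\theta\|_{\infty}.
\]
Multiplying this by $\sup|\theta|+\inf|\theta|\leq 2\sup|\theta|$ yields the oscillation estimate
\[
\sup|\theta|^{2}-\inf|\theta|^{2}\leq 2\,\sup|\theta|\cdot D\,\|\nabla\theta\|_{\infty},
\]
equivalently
\[
1-\frac{\inf|\theta|^{2}}{\sup|\theta|^{2}}\leq \frac{2\,D\,\|\nabla\theta\|_{\infty}}{\sup|\theta|}.
\]

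Finally I would invoke Proposition \ref{P6}. By definition, $\varepsilon$ is exactly the quantity such that $D\,\|\nabla\theta\|_{\infty}\leq \varepsilon\,\|\theta\|_{2}$. Since the $L^{p}$-norms in this paper are \emph{normalized} (divided by $\operatorname{Vol}(g)$), one has the trivial pointwise-to-integral bound $\|\theta\|_{2}\leq\|\theta\|_{\infty}=\sup|\theta|$. Combining these two facts,
\[
\frac{2\,D\,\|\nabla\theta\|_{\infty}}{\sup|\theta|}\leq \frac{2\varepsilon\,\|\theta\|_{2}}{\sup|\theta|}\leq 2\varepsilon,
\]
and rearranging the oscillation inequality gives the desired bound $\inf|\theta|^{2}/\sup|\theta|^{2}\geq 1-2\varepsilon$.

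There is no serious obstacle: the argument is essentially a bookkeeping exercise that reads off $\varepsilon$ as the prefactor in Proposition \ref{P6} and uses the normalization convention for $\|\cdot\|_{2}$. The only subtle point is the initial application of Theorem \ref{T6} to $|\theta|$, which is only Lipschitz rather than smooth; this is standard but should be mentioned, either by invoking Rademacher's theorem together with Kato's inequality almost everywhere, or by working with $\sqrt{|\theta|^{2}+\delta^{2}}$ and letting $\delta\downarrow 0$.
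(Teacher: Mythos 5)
Your proposal is correct and follows essentially the same route as the paper: Kato's inequality plus Theorem \ref{T6} to bound the oscillation, then Proposition \ref{P6} together with $\|\theta\|_{2}\leq\|\theta\|_{\infty}$ (normalized norms) to read off the factor $2\varepsilon$. The only cosmetic difference is that the paper applies Theorem \ref{T6} to the smooth function $f=|\theta|^{2}$ (with $|\nabla f|\leq 2|\theta||\nabla\theta|$), which sidesteps the regularity issue you rightly flag when working with $|\theta|$ itself.
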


\begin{proof}
	Let $f = |\theta|^2$. Since $\nabla f = 2|\theta|\nabla|\theta|$ and by Kato's inequality $|\nabla|\theta|| \leq |\nabla\theta|$, we have
	\[
	|\nabla f| \leq 2|\theta||\nabla\theta|.
	\]
	Theorem \ref{T6} then implies
	\[
	\sup f - \inf f \leq 2\|\nabla\theta\|_{\infty} \cdot \|\theta\|_{\infty} \cdot D(g).
	\]
By Proposition \ref{P6}  and the fact that $\sup f\geq\|\theta\|^{2}_{2}$, we finally obtain
	 \begin{align*}
	\inf f \geq \sup f(1-2D\|\nabla\theta\|_{\infty} )\geq\sup f(1-2\varepsilon) .
	\end{align*}	
\end{proof}
Recall that the Gauss–Bonnet–Chern theorem is closely related to the Poincaré–Hopf index formula, which expresses the Euler characteristic as the sum of indices of singularities of a tangent vector field. As a consequence, if the eigenvalue on $1$-forms is sufficiently small, then the Euler characteristic must vanish.
\begin{proof}[\textbf{Proof of Theorem \ref{T3}}]
	By Proposition \ref{P1}, we may take the Sobolev constant to be 
	\[
	C_{s} = C(n) e^{(2n-1)\sqrt{\kappa D^{2}}} D.
	\]
Recall that  $B:=\lambda + \|\mathrm{Ric}^{-}\|_{p}  + \|\mathrm{Riem}\|_{2p}$ and $t = 4 C_s\sqrt{B}\sqrt{1+BD^{2}}$.

Since $\operatorname{Ric}(g) \geq -(2n-1)\kappa$, it follows that if 
$$\sqrt{\lambda D^2}\leq e^{-(2n-1)\sqrt{\kappa D^{2}}},$$ 
then  there exists a constant $C_0(n,p)>0$ such that 
	\begin{equation*}
	\begin{split}
	1+\sqrt{t}&=1+\big{(}4C(n)e^{(2n-1)\sqrt{k D^{2}}}\sqrt{BD^{2}}\sqrt{1+BD^{2}}\big{)}^{\frac{1}{2}}\\
	&\leq 1+\sqrt{4C(n)e^{(2n-1)\sqrt{k D^{2}}}}\sqrt{1+BD^{2}}.\\
	&\leq C_0(n,p)e^{(2n-1)\sqrt{k D^{2}}}\left(1+\sqrt{\|\mathrm{Riem}\|_{2p}}\,D\right)
	\end{split}
	\end{equation*}	
Take
$$\tilde{C}(n,p)=\frac{1}{(4C(n,p))^{\frac{1}{\de}}C_0(n,p)}\exp\left(-\frac{C(n)}{\de}\right),$$
where $\de=\frac{2pn}{p-n}$ or $\frac{2pn}{p-n+pn}$ (note that the $\dim X=2n$).

	Suppose, for contradiction, that $\lambda = \lambda^{(1)}_1$ is so small that
	\[
	\sqrt{\lambda D^2} \leq \min\left\{ \left( \frac{\tilde{C}(n,p)}{1 + \sqrt{\|\mathrm{Riem}\|_{2p}}\,D}e^{-(2n-1)\sqrt{\kappa D^{2}}} \right)^{\frac{2pn}{p-n}}, e^{-(2n-1)\sqrt{\kappa D^{2}}} \right\},
	\]
We can estimate the following term in the inequality in Theorem \ref{T2}.
\begin{equation*}
\begin{split}
\varepsilon&=C(n,p)(1+\sqrt{t})^{\de }(\sqrt{\la D^{2}})^{\frac{p-n}{2pn}\de}\exp\{C(n,p)\sqrt{\la}C_{s} \}\\
&\leq C(n,p)\left(C_0(n,p)e^{(2n-1)\sqrt{k D^{2}}}\left(1+\sqrt{\|\mathrm{Riem}\|_{2p}}\,D\right)\right)^{\de}\\
&\times \left( \frac{\tilde{C}(n,p)}{1 + \sqrt{\|\mathrm{Riem}\|_{2p}}\,D}e^{-(2n-1)\sqrt{\kappa D^{2}}} \right)^{\de}\exp\{C(n)\}\\
&\leq \frac{1}{4}.\\
\end{split}
\end{equation*}
Therefore, Theorem \ref{T2} implies
	\[
	\frac{\inf |\theta|^{2}}{\sup |\theta|^{2}} \geq \frac{1}{2}.
	\]
In particular, there exists a nowhere-zero $1$-form, and hence the Euler characteristic of $X$ must vanish, contradicting the hypothesis $\chi(X) \neq 0$.

\end{proof}

\section{ Geometric and Topological Applications}

\subsection{Harmonic $1$-Forms and Vanishing Euler Characteristic}
In this section, we first establish the connection between the first eigenvalue of the rough Laplacian acting on $1$-forms and the lower bound of the Ricci curvature.

\begin{proposition}\label{P3}
	Let $(X, g)$ be a closed $n$-dimensional smooth Riemannian manifold. Suppose the Ricci curvature of $X$ satisfies
	\[
	\operatorname{Ric}(g) \geq -\kappa,
	\]
	where $\kappa > 0$. If $b_1(X) > 0$, then {either $\ker \nabla \cap \Omega^1(X) \neq \{0\}$} or 
	\[
	\lambda_1^{(1)} \leq\kappa.
	\]
\end{proposition}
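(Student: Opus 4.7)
The plan is to run the standard Bochner argument on a harmonic $1$-form and then split into cases depending on whether a nonzero parallel $1$-form exists. Since $b_1(X)>0$, Hodge theory gives a nonzero harmonic $1$-form $\alpha\in\mathcal{H}^1_d$, i.e.\ $\Delta_d\alpha=0$. By the Bochner formula on $1$-forms,
\[
0=\Delta_d\alpha=\nabla^{\ast}\nabla\alpha+\operatorname{Ric}(\alpha),
\]
so pairing with $\alpha$ and integrating over $X$ yields
\[
\int_X|\nabla\alpha|^2=-\int_X\langle\operatorname{Ric}(\alpha),\alpha\rangle\leq \kappa\int_X|\alpha|^2,
\]
where I used the hypothesis $\operatorname{Ric}(g)\geq-\kappa$.

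Now I split into two cases. If $\ker\nabla\cap\Omega^1(X)\neq\{0\}$, the first alternative of the conclusion holds and there is nothing further to prove. Otherwise $\ker\nabla\cap\Omega^1(X)=\{0\}$, so by Lemma \ref{L5}-type reasoning the orthogonal complement $H_0$ in the spectral decomposition equals the full space $\Omega^1(X)$, and
\[
\lambda^{(1)}_1=\inf_{\beta\neq 0}R(\beta).
\]
The harmonic form $\alpha$ is nonzero and, because $\ker\nabla\cap\Omega^1(X)=\{0\}$, we in fact have $\nabla\alpha\neq 0$ (otherwise $\alpha$ itself would be parallel). Plugging $\alpha$ into the Rayleigh quotient and applying the Bochner inequality above gives
\[
\lambda^{(1)}_1\leq R(\alpha)=\frac{\int_X|\nabla\alpha|^2}{\int_X|\alpha|^2}\leq \kappa,
\]
which is the second alternative.

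If one wishes to avoid assuming $\ker\nabla\cap\Omega^1(X)=\{0\}$ outright and instead handle a mixed situation cleanly, the plan is to decompose the harmonic form $\alpha=\alpha_0+\alpha_0^\perp$ with $\alpha_0\in\ker\nabla\cap\Omega^1(X)$ and $\alpha_0^\perp\in H_0$; since parallel forms are automatically harmonic, $\alpha_0^\perp=\alpha-\alpha_0$ is still harmonic, and the Bochner computation applied to $\alpha_0^\perp$ produces the same bound $\|\nabla\alpha_0^\perp\|_2^2\leq\kappa\|\alpha_0^\perp\|_2^2$. Then either $\alpha_0^\perp=0$ (in which case $\alpha=\alpha_0$ is a nonzero parallel $1$-form and the first alternative holds), or $\alpha_0^\perp\neq 0$ gives an admissible test section in $H_0$ for the variational characterization of $\lambda^{(1)}_1$, yielding the second alternative. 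There is no real obstacle here: the only point requiring care is correctly using the spectral decomposition so that the test form lies in $H_0$, which is precisely what the orthogonal splitting above arranges.
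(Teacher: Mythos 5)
Your argument is correct and is essentially identical to the paper's proof: both use the Weitzenb\"ock formula on a nonzero harmonic $1$-form guaranteed by $b_1(X)>0$, integrate against the form, and invoke the variational characterization of $\lambda^{(1)}_1$ (Lemma \ref{L5}) in the case $\ker\nabla\cap\Omega^1(X)=\{0\}$. The extra paragraph on the orthogonal decomposition is harmless but unnecessary, since the disjunctive form of the conclusion already lets you assume $\ker\nabla\cap\Omega^1(X)=\{0\}$ outright.
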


\begin{proof}
	{Since $b_1(X) > 0$, there exists a nonzero harmonic $1$-form $\alpha$ on $X$}. By the Weitzenb\"ock formula,
	\[
	0 = \Delta_d \alpha = \nabla^* \nabla \alpha + \operatorname{Ric}(\alpha).
	\]
	{Therefore, if $\ker \nabla \cap \Omega^1(X) =\{0\}$}, then by Lemma \ref{L5} we have
	\[
	\lambda_1^{(1)} \leq R(\alpha) = \frac{\|\nabla \alpha\|^2}{\|\alpha\|^2} = \frac{-\int_X \operatorname{Ric}(\alpha, \alpha)}{\|\alpha\|^2} \leq\kappa.
	\]
	
\end{proof}

\begin{corollary}\label{C3}
	Let $X$ be a closed $2n$-dimensional Riemannian manifold with $b_1(X) > 0$. Suppose there exists a sequence of metrics $\{g_i\}$ on $X$ such that
	\[
	\operatorname{Ric}(g_i) \geq -\frac{1}{i}, \quad \|\operatorname{Riem}(g_i)\|_{2p} \leq K, \quad \text{and} \quad \operatorname{diam}(X) \leq 1.
	\]
	Then the Euler characteristic of $X$ satisfies $\chi(X) = 0$.
\end{corollary}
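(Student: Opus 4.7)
The plan is to derive a contradiction by playing the upper bound from Proposition \ref{P3} against the lower bound from Theorem \ref{T3}. Suppose, for contradiction, that $\chi(X) \neq 0$. Since $X$ is closed and $2n$-dimensional, the Poincar\'e--Hopf theorem forbids any nowhere-vanishing vector field on $X$; in particular, none of the metrics $g_i$ can admit a nontrivial parallel $1$-form, so $\ker\na \cap \Om^1(X) = \{0\}$ for every $i$.

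With this in hand, Proposition \ref{P3} applied to $(X, g_i)$, together with $\operatorname{Ric}(g_i) \geq -1/i$ and the hypothesis $b_1(X) > 0$, forces the upper bound
$$\la_1^{(1)}(g_i) \leq \frac{1}{i} \longrightarrow 0.$$
On the other hand, since $\chi(X) \neq 0$, Theorem \ref{T3} applies to each $(X, g_i)$ with $\kappa_i = \frac{1}{(2n-1)i}$, $D = 1$, and $\|\mathrm{Riem}(g_i)\|_{2p} \leq K$. Because $\kappa_i \to 0$, the exponential factors $e^{-(2n-1)\sqrt{\kappa_i}}$ tend to $1$, while $\sqrt{\|\mathrm{Riem}(g_i)\|_{2p} D^2} \leq \sqrt{K}$ is uniformly bounded, so the lower bound
$$\min\left\{\left(\frac{\tilde{C}(n,p)}{1 + \sqrt{K}}\, e^{-(2n-1)\sqrt{\kappa_i}}\right)^{\frac{2pn}{p-n}}, \ e^{-(2n-1)\sqrt{\kappa_i}}\right\}$$
converges to a strictly positive constant depending only on $n$, $p$, and $K$. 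Consequently $\la_1^{(1)}(g_i)$ is uniformly bounded below by a positive constant for all large $i$, contradicting $\la_1^{(1)}(g_i) \leq 1/i$. The hypothesis $\chi(X) \neq 0$ must therefore fail.

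The argument is essentially a direct combination of two results already in the paper, so I do not anticipate any serious obstacle. The only point deserving care is verifying that the lower bound furnished by Theorem \ref{T3} stays uniformly positive along the sequence; this is straightforward since $K$ and $D$ are held fixed and the only $i$-dependent quantity, $\sqrt{\kappa_i}$, tends to zero, so the exponentials simply converge to $1$.
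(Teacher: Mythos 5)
Your proposal is correct and follows essentially the same route as the paper: assume $\chi(X)\neq 0$, use Proposition \ref{P3} (with $\ker\na\cap\Om^{1}=\{0\}$ from Poincar\'e--Hopf) to force $\la_1^{(1)}(g_i)\leq 1/i$, and contradict this with the uniform positive lower bound from Theorem \ref{T3}, noting the exponential factors tend to $1$ while $K$ and $D$ are fixed.
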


\begin{proof}
	Suppose, for contradiction, that $\chi(X) \neq 0$. Then $\ker \nabla \cap \Omega^1(X) = \{0\}$. For each metric $g_i$, Proposition \ref{P3} implies that the corresponding first eigenvalue of the rough Laplacian on $1$-forms satisfies
	\[
	\lambda_1^{(1)}(g_i) \leq \frac{1}{i}.
	\]
	On the other hand, for sufficiently large $i$, 
	$$e^{-(2n-1)\sqrt{\frac{D^{2}}{(2n-1)i}}}\geq\frac{1}{2}$$.
	Therefore, Theorem \ref{T3} gives
	\[
\sqrt{\lambda_1^{(1)}(g_i)} \geq \min\left\{ \frac{1}{2}\left( \frac{\tilde{C}(n,p)}{1 + \sqrt{\|\operatorname{Riem}\|_{2p}}} \right)^{\frac{2pn}{p-n}}, \frac{1}{2}\tilde{C}(n,p) \right\}.
	\]
	This leads to a contradiction. Hence, $\chi(X) = 0$.
\end{proof}

\subsection{Killing Vector Fields and Finiteness of the Isometry Group}
When the isometry group of a closed Riemannian manifold $(X, g)$ is infinite, one can also relate the first eigenvalue of the rough Laplacian on $1$-forms to an upper bound on the Ricci curvature.

\begin{proposition}\label{P4}
	Let $(X, g)$ be a closed $n$-dimensional smooth Riemannian manifold. Suppose the Ricci curvature satisfies
	\[
	\operatorname{Ric}(g) \leq\kappa,
	\]
	for some constant $\kappa > 0$. If the isometry group of $g$ is infinite, then either $\ker \nabla \cap \Omega^1(X) \neq \{0\}$ or
	\[
	\lambda_1^{(1)} \leq\kappa.
	\]
\end{proposition}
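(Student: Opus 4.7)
The strategy mirrors the proof of Proposition \ref{P3}: produce a distinguished nonzero $1$-form built from the geometry of the hypothesis, check that it is not parallel (using the assumed alternative $\ker\nabla \cap \Omega^1(X) = \{0\}$), and then use it as a test form for the variational characterization of $\lambda_1^{(1)}$ given by Lemma \ref{L5}'s generalization (the description of $\lambda_1^{(1)}$ as the infimum of the Rayleigh quotient over $H_0 = \Omega^1(X)$). In the present setting the natural test form is the metric-dual $1$-form $\omega = K^\flat$ of a nontrivial Killing vector field $K$.

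The first step is to extract such a $K$ from the assumption that $\operatorname{Iso}(X,g)$ is infinite. By the Myers--Steenrod theorem, $\operatorname{Iso}(X,g)$ is a Lie group, and since $X$ is closed it is in fact compact. A compact Lie group is either finite or positive-dimensional; infiniteness of $\operatorname{Iso}(X,g)$ therefore forces $\dim \operatorname{Iso}(X,g) \geq 1$, and its Lie algebra, which consists precisely of the Killing vector fields on $(X,g)$, contains a nonzero element $K$. Let $\omega = K^\flat \in \Omega^1(X)$; then $\omega \neq 0$, and the assumption $\ker\nabla \cap \Omega^1(X) = \{0\}$ guarantees that $\omega$ is not parallel, so $\|\nabla\omega\|_{L^2} > 0$ and $\omega$ is a legitimate competitor in the Rayleigh quotient.

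The second step is the standard Bochner identity for Killing $1$-forms. Since $K$ is Killing, $\nabla\omega$ is a pointwise skew-symmetric $(0,2)$-tensor, which gives both $d^*\omega = -\operatorname{tr}(\nabla\omega) = 0$ and $(d\omega)_{ij} = 2(\nabla\omega)_{ij}$, hence $|d\omega|^2 = 2|\nabla\omega|^2$. Plugging these into the integrated Weitzenb\"ock formula
\[
\int_X |d\omega|^2 + \int_X |d^*\omega|^2 = \int_X |\nabla\omega|^2 + \int_X \operatorname{Ric}(\omega,\omega)
\]
yields the clean identity
\[
\int_X |\nabla\omega|^2 = \int_X \operatorname{Ric}(\omega,\omega).
\]

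The final step is immediate: using the upper Ricci bound $\operatorname{Ric}(g) \leq \kappa$ and the fact that $H_0 = \Omega^1(X)$ under our alternative, we get
\[
\lambda_1^{(1)} \;\leq\; R(\omega) \;=\; \frac{\int_X |\nabla\omega|^2}{\int_X |\omega|^2} \;=\; \frac{\int_X \operatorname{Ric}(\omega,\omega)}{\int_X |\omega|^2} \;\leq\; \kappa,
\]
which is the asserted inequality. There is no serious technical obstacle here; the only point that requires care is the invocation of Myers--Steenrod to pass from the group-theoretic hypothesis (infinite isometry group) to the analytic object (a nontrivial Killing field), and the sign bookkeeping in the Bochner identity showing that for Killing $1$-forms the standard Bochner term $\operatorname{Ric}(\omega,\omega)$ appears with a $+$ sign after cancellation with $|d\omega|^2 - |\nabla\omega|^2 = |\nabla\omega|^2$.
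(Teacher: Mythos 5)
Your proposal is correct and follows essentially the same route as the paper: extract a nonzero Killing field from the infinite isometry group, dualize it to a $1$-form, establish the identity $\int_X |\nabla\omega|^2 = \int_X \operatorname{Ric}(\omega,\omega)$, and feed $\omega$ into the Rayleigh quotient via Lemma \ref{L5}. The only cosmetic difference is that you derive the key identity from the integrated Weitzenb\"ock formula together with $d^*\omega=0$ and $|d\omega|^2 = 2|\nabla\omega|^2$ for Killing $1$-forms, whereas the paper integrates the pointwise Bochner formula $-\tfrac12\Delta_d|V|^2 = |\nabla V|^2 - \operatorname{Ric}(V,V)$; these are equivalent computations.
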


\begin{proof}
	Since $\operatorname{Iso}(X, g)$ is infinite, there exists a nonzero Killing vector field $V$ on $X$ (i.e., $\mathcal{L}_V g = 0$). 
	By \cite[Proposition 1.4]{Pet}, we have the identity
	\[
	-\frac{1}{2} \Delta_d |V|^2 = |\nabla V|^2 - \operatorname{Ric}(V, V).
	\]
	Integrating over $X$ yields
	\[
	\int_X |\nabla V|^2 - \int_X \operatorname{Ric}(V, V) = 0.
	\]
	Let $\alpha = V^\flat$ be the $1$-form dual to $V$. Note that $|\nabla \alpha|^2 = |\nabla V|^2$ and $|\alpha|^2 = |V|^2$.  If $\ker \nabla \cap \Omega^1(X) = \{0\}$, then by Lemma~\ref{L5} we obtain
	\[
	\lambda_1^{(1)} \leq R(\alpha) = \frac{\|\nabla \alpha\|_{L^2}^2}{\|\alpha\|_{L^2}^2} = \frac{\|\nabla V\|_{L^2}^2}{\|V\|_{L^2}^2}.
	\]
	Using the integrated identity above, it follows that
	\[
	\lambda_1^{(1)} \leq \frac{\int_X \operatorname{Ric}(V, V)}{\|V\|_{L^2}^2} \leq \kappa.
	\]
\end{proof}

\begin{proof}[\textbf{Proof of Corollary \ref{C4}}]
	Suppose, for contradiction, that $\chi(X) \neq 0$. Then $\ker \nabla \cap \Omega^1 = 0$. 
	By Proposition~\ref{P4}, the first eigenvalue of the rough Laplacian on $1$-forms satisfies $\lambda_1^{(1)} \leq \lambda$. 
	This leads to a contradiction (see Theorem \ref{T3}), and hence $\chi(X) = 0$.
\end{proof}

  \section*{Acknowledgements}
We are very grateful to Prof. Aubry for the detailed explanation of the specifics in his articles \cite{Aub,ACGR}. This work is supported by the National Natural Science Foundation of China Nos. 12271496 and the Youth Innovation Promotion Association CAS, the Fundamental Research Funds of the Central Universities, and the USTC Research Funds of the Double First-Class Initiative.
	
\noindent\textbf{Data availability} {This manuscript has no associated data.}
	\section*{Declarations}
\noindent\textbf{Conflict of interest} The author states that there is no conflict of interest.

	\bigskip
	\footnotesize
	
\end{document}